\DeclareMathOperator{\codim}{codim}
\DeclareMathOperator{\Gal}{Gal}
\DeclareMathOperator{\lcm}{lcm}
\DeclareMathOperator{\Stab}{Stab}
\newcommand{\Ators}{A_{\mathrm{tors}}}
\newcommand{\PP}{{\mathbb{P}}}
\newtheorem{theorem}{Theorem}[section]
\newtheorem{lemma}[theorem]{Lemma}
\newtheorem{conjecture}[theorem]{Conjecture}
\newtheorem{proposition}[theorem]{Proposition}
\newtheorem{definition}[theorem]{Definition}
\newtheorem{corollary}[theorem]{Corollary}
\begin{document}

\title{A bound for the torsion on subvarieties of abelian varieties}

\author{Aur\'elien Galateau}
\address{Universit\'e de Bourgogne Franche-Comt\'e}
\email{aurelien.galateau@univ-fcomte.fr}
\author{C\'esar Mart\'inez}
\address{Universit\'e de Caen}
\email{cesar.martinez@unicaen.fr}

\begin{abstract}
We give a uniform bound on the degree of the maximal torsion cosets for subvarieties of an abelian variety. The proof combines algebraic interpolation and a theorem of Serre on homotheties in the Galois representation associated to the torsion subgroup of an abelian variety. 
\end{abstract}

\maketitle

\section{Introduction}
The problem of understanding the distribution of torsion points in subvarieties of abelian varieties was independently raised by Manin and Mumford, who stated the following conjecture.
\begin{conjecture}[Manin-Mumford]
Let $C$ be an algebraic curve of genus $g \geq 2$, defined over a number field and embedded in its Jacobian~$J$. The set of torsion points of $J$ which lie in $C$ is finite. 
\end{conjecture}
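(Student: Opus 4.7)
The plan is to argue by contradiction, combining Serre's theorem on homotheties with a Bezout-type intersection estimate. Let $K$ be a number field over which $C$ and $J$ are defined, set $G := \Gal(\bar K / K)$, and suppose that $C(\bar K) \cap J_{\text{tors}}$ is infinite.

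Serre's theorem provides a constant $c = c(J,K)$ such that for every $N \geq 1$ the image of $G$ in $\mathrm{GL}(J[N])$ contains the homothety $[a]$ for every $a$ in a subgroup $H_N \subseteq (\ZZ/N\ZZ)^\times$ of index at most $c$. Since $C$ is $K$-rational, any Galois conjugate of a torsion point of $C$ still lies on $C$, so for a torsion $P \in C$ of exact order $N$ the orbit $\{[a]P : a \in H_N\}$ consists of at least $\phi(N)/c$ distinct torsion points of order $N$ on $C$; by hypothesis, such $N$ can be chosen arbitrarily large.

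The contradiction will come from choosing a suitable multiplier $a_0$. For $g \geq 2$, the stabilizer $S := \{n \in \ZZ \setminus \{0\} : [n](C) = C\}$ is finite: any $n \in S$ induces a non-constant self-morphism $[n]|_C$ of the curve $C$, and the theorem of de Franchis--Severi bounds the number of such maps, so if $S$ were infinite two integers $n \neq n'$ would induce the same self-map, forcing $C \subseteq \ker[n-n']$, absurd. Next I would pick $a_0 \in H_N$ with $a_0 \notin S$ and $a_0 \leq N^{\varepsilon}$ for a small $\varepsilon = \varepsilon(g)$; the existence of such $a_0$ for $N$ large follows from the density bound $|H_N|/|(\ZZ/N\ZZ)^\times| \geq 1/c$ together with standard analytic input (e.g.\ a Burgess-type estimate on character sums modulo $N$). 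For every $b \in H_N$ the identity $[a_0]([b]P) = [a_0 b]P \in C$ then shows that all $\phi(N)/c$ orbit points $[b]P$ lie in the finite set $C \cap [a_0]^{-1}(C)$. A Bezout-type bound in $J$, relative to a symmetric ample line bundle $L$ satisfying $[a]^*L \equiv a^2 L$, yields $|C \cap [a_0]^{-1}(C)| \leq (\deg_L C)^2 \cdot a_0^{2g-2} \leq (\deg_L C)^2 \cdot N^{\varepsilon(2g-2)}$, which for $\varepsilon$ small is dwarfed by $\phi(N)/c \gg N^{1-o(1)}$; the contradiction follows for $N$ large.

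The main obstacle, and the step requiring the most care, is the Bezout-type estimate for the intersection of two curves in a high-dimensional abelian variety: na\"ive product-of-degree bounds are not available in codimension greater than one, and a precise intersection-theoretic argument using the projection formula together with the cube relation $[a]^*L \equiv a^2 L$ is required. The algebraic interpolation method developed in this paper sharpens and uniformizes this step, constructing auxiliary sections that vanish simultaneously along $C$ and along many translates $[a]^{-1}(C)$; this is what ultimately upgrades the qualitative Manin--Mumford contradiction above into the effective degree bound on maximal torsion cosets announced in the abstract.
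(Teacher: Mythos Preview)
The paper does not itself prove this conjecture: it is stated as the Manin--Mumford conjecture and immediately attributed to Raynaud. What the paper does prove is the quantitative Theorem~\ref{theorem curves}, whose proof (the Proposition in \S3.5) yields finiteness as a by-product, so that is the relevant comparison.

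Your overall strategy---Serre's homothety theorem plus a B\'ezout bound, in the spirit of Lang and Hindry---is the one the paper uses for curves, but there are real problems with your execution. The version of Serre's theorem you invoke (``$H_N$ has index at most $c$ in $(\ZZ/N\ZZ)^\times$'') is precisely Lang's conjecture, which the paper explicitly flags as open in the paragraph before Theorem~\ref{homotheties}. What Serre actually proved is that every $c$-th power $k^{c}$ with $\gcd(k,N)=1$ acts as a homothety on $A[N]$; the $c$-th powers do \emph{not} have bounded index in $(\ZZ/N\ZZ)^\times$ as $N$ varies, so your orbit-size claim $\phi(N)/c$ is unjustified. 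The argument is reparable (the number of $c$-th powers is still $N^{1-o(1)}$), but as written it rests on an unproved statement. Second, the detour through Burgess-type estimates is unnecessary: with the correct Serre theorem you may take $a_0=k^{c}$ for $k$ the smallest prime not dividing $N$, so $a_0=O((\log N)^{c})$ automatically; and since Riemann--Hurwitz forbids a self-map of degree $>1$ on a curve of genus $\geq 2$, one has $[a_0]C\neq C$ whenever $a_0\geq 2$, making the de~Franchis--Severi argument superfluous. Third, you overstate the B\'ezout difficulty: the refined B\'ezout inequality (Fulton, \emph{Intersection Theory}, Example~8.4.6) bounds the total degree of $V\cap W$ by $\deg(V)\deg(W)$ for arbitrary subvarieties of $\PP^n$, with no codimension hypothesis, and the paper invokes it in \S3.5 without further ado.

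The paper's route also differs in structure: rather than argue by contradiction from a single torsion point of large order, it builds---via the case analysis of Proposition~\ref{prop:V'} and Lemma~\ref{lemma:V'-stabilizer}---an explicit auxiliary curve $C'$ with $C_{\mathrm{tors}}\subset C\cap C'\subsetneq C$ and $\deg(C')\ll_A\deg(C)$, then applies B\'ezout once. This gives the uniform bound $|C_{\mathrm{tors}}|\ll_A\deg(C)^2$ directly, whereas your argument, even once corrected, yields only qualitative finiteness.
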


\noindent
It was proved in 1983 by Raynaud~\cite{Raynaud83}, who soon generalized his theorem to arbitrary subvarieties of an abelian variety.
\begin{theorem}[Raynaud, \cite{Raynaud83a}]
\label{raynaud}
Let $V$ be a subvariety of an abelian variety $A$ defined over a number field. Then the Zariski closure of the set of torsion points in $V$ is a finite union of translates of abelian subvarieties of $A$ by torsion points. 
\end{theorem}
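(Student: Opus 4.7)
The plan is to prove Raynaud's theorem through a Galois-theoretic argument built on Serre's theorem on homotheties in the Galois image on torsion, which is announced as a key input of the paper. The strategy has three phases: a Noetherian reduction to a geometric claim about Zariski dense torsion, an extraction of multiplicative invariance for $V$ from the Galois action, and a concluding structural lemma identifying $[a]$-stable subvarieties with torsion cosets.

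First I would set up the reduction. Let $W$ denote the Zariski closure in $V$ of the set of torsion points of $A$ lying in $V$. After decomposing $W$ into finitely many irreducible components (each defined over some number field) and applying Noetherian induction on $\dim V$, it suffices to prove the following claim: if $V \subseteq A$ is an irreducible closed subvariety defined over a number field $K$ in which the torsion points form a Zariski dense subset, then $V$ is the translate of an abelian subvariety by a torsion point.

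Next I would activate the Galois action. For a torsion point $\zeta \in V$ of order $n$, Serre's theorem on homotheties provides an element $\sigma_a \in \Gal(\bar{K}/K)$ acting as multiplication by $a$ on the $n$-torsion subgroup $A[n]$, for $a$ ranging over a subgroup of $(\ZZ/n\ZZ)^{\ast}$ whose index is bounded independently of $n$. Since $V$ is defined over $K$, the identity $[a]\zeta=\sigma_a(\zeta)\in V$ holds. Letting $\zeta$ vary over the Zariski dense set of torsion points of $V$, the morphism $[a]\colon A\to A$ carries a Zariski dense subset of $V$ into $V$, and hence $[a]V\subseteq V$ after taking closure. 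Because $[a]$ is finite surjective and $V$ is irreducible, $[a]V$ is an irreducible closed subvariety of $V$ of the same dimension, so $[a]V=V$. One obtains this equality for infinitely many integers $a$, in particular for some with $|a|\geq 2$.

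To conclude, I would invoke the classical structural lemma: if $V\subseteq A$ is irreducible closed and $[a]V=V$ for some integer $|a|\geq 2$, then $V$ is a torsion coset. Letting $B$ be the connected component of $\Stab(V)$ and $\pi\colon A\to A/B$ the quotient, the image $\bar V=\pi(V)$ has trivial connected stabilizer and still satisfies $[a]\bar V=\bar V$; a degree-and-dimension computation in $A/B$, using that $[a]$ on the ambient variety has degree $a^{2\dim(A/B)}$ while $[a]|_{\bar V}$ has degree $a^{2\dim\bar V}$, forces $\dim\bar V=0$. The resulting point of $A/B$ is fixed by $[a]$, annihilated by $[a-1]$, and therefore torsion, so $V$ is the translate of $B$ by a torsion lift (using divisibility of $B$ to adjust the lift if needed). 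The hardest step is this final structural one: converting multiplicative invariance into the torsion-coset conclusion is delicate, and it is precisely where effective control on the admissible $a$ and on the degrees of the auxiliary varieties — which the algebraic interpolation tool cited in the abstract is designed to supply — upgrades the qualitative Raynaud theorem to the uniform quantitative statement that motivates the paper.
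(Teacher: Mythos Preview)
First, note that the paper does not itself prove Theorem~\ref{raynaud}; it is cited as Raynaud's result and serves as background. The paper's own machinery (Proposition~\ref{prop:V'}, Lemma~\ref{lemma:V'-stabilizer}, Proposition~\ref{prop:Z}) does reprove it implicitly by Noetherian descent, so there is still something to compare against.

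Your outline is the classical Lang--Hindry strategy and is broadly sound, but Step~2 has a genuine gap. You claim that for a fixed integer $a$ the set of $\zeta \in V_{\mathrm{tors}}$ with $[a]\zeta \in V$ is Zariski dense. Serre's theorem, however, produces $\sigma$ with $\sigma|_{A[n]} = [a]$ only when $a$ lies in the relevant subgroup of $(\ZZ/n\ZZ)^{\ast}$, and in particular $a$ must be coprime to $n = \ord(\zeta)$. For $a = p^{c}$ the argument therefore covers only torsion of order prime to $p$, and such points need not be dense: take $V = \{P\} \times E \subset E \times E$ with $\ord(P) = 2$, where every point of $V_{\mathrm{tors}}$ has even order, so the choice $p=2$ fails outright. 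A repair is available --- for fixed $p$, the sets $\{\zeta \in V_{\mathrm{tors}} : v_{p}(\ord\zeta) \leq N\}$ exhaust $V_{\mathrm{tors}}$ as $N$ grows, so by Noetherianity one of them is dense; combined with the adelic form of Serre's theorem this yields $[p^{c}]V = V + P_{0}$ for some $P_{0} \in A[p^{N}]$, and your structural lemma then applies after translating $V$ by a torsion preimage of $P_{0}$ under $[p^{c}-1]$. But this is more delicate than your write-up indicates, and the version of Serre's theorem stated in the paper (Theorem~\ref{homotheties}, which is only modulo a single $l$) is not by itself enough.

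The paper's implicit route is genuinely different. Rather than establishing $[a]V = V$ and invoking a structural lemma, it constructs (Proposition~\ref{prop:V'} and Lemma~\ref{lemma:V'-stabilizer}) an explicit auxiliary set $V'$ --- a finite union of Galois conjugates, torsion translates, and $[2^{c}]$-pullbacks of $V$ --- satisfying $V_{\mathrm{tors}} \subset V \cap V' \subsetneq V$, via a careful case split on the $2$-adic valuation of $\ord(\zeta)$. This sidesteps both the density issue and the structural lemma, and it is precisely what feeds into the interpolation step (Proposition~\ref{prop:Z}) to produce the quantitative degree bounds. Your closing remark that interpolation ``upgrades'' the structural lemma does not reflect how the paper actually deploys it.
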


\noindent
This statement is still true if $A$ is replaced by a semi-abelian variety defined over a number field (see~\cite{Hindry88}) or even any field of caracteristic~$0$ (see~\cite{McQuillan95}). The aim of this paper is to give a precise bound for the degree of the maximal {\it torsion cosets} (translates of abelian subvarieties by torsion points) that appear in Raynaud's theorem.

\subsection{The number of torsion points on curves}

In the case of curves, the most spectacular quantitative versions of Raynaud's theorem are related to a famous conjecture of Coleman.
Let $C$ be a curve of genus $g \geq 2$ embedded in its Jacobian $J$, all defined over a number field $K$. We denote by $J_{\mathrm{tors}}$ the torsion subgroup of $J$ and by $C_{\mathrm{tors}}:= C(\bar{K}) \cap J_{\mathrm{tors}}$ the finite torsion subset of~$C$.

\begin{conjecture}[Coleman, \cite{Coleman87}]
\label{coleman}
Let $\mathfrak{p}$ be a prime ideal of $\mathcal{O}_K$ above a rational prime $p$ such that the following conditions are satisfied:

- $p \geq 5$,

- $K/\mathbb{Q}$ is unramified at $\mathfrak{p}$,

- $C$ has good reduction at $\mathfrak{p}$.

\noindent
Then the extension $K(C_{\mathrm{tors}})/K$ is unramified above $\mathfrak{p}$.
\end{conjecture}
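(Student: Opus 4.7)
The plan is to split any torsion point $P \in C_{\mathrm{tors}}$ as $P = P' + P''$ with $P'$ of order prime to~$p$ and $P''$ of $p$-power order, and to prove that both $K(P')/K$ and $K(P'')/K$ are unramified above~$\mathfrak{p}$.

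The prime-to-$p$ part is classical. Since $C$ has good reduction at $\mathfrak{p}$, the Jacobian $J$ does too, by the N\'eron--Ogg--Shafarevich criterion, so reduction modulo~$\mathfrak{p}$ induces an injection $J[n]\hookrightarrow \tilde J(\bar{\mathbb{F}}_\mathfrak{p})$ for every integer~$n$ coprime to~$p$. The inertia $I_{\mathfrak{p}}\subset \Gal(\bar{K}/K)$ therefore acts trivially on $J[n]$, and $K(P')/K$ is unramified at~$\mathfrak{p}$.

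The substantial difficulty lies in the $p$-power torsion, and this is where $p \ge 5$ and $e(\mathfrak{p}\mid p)=1$ must be used decisively. My strategy would draw on results of Fontaine and Raynaud on finite flat group schemes over an absolutely unramified base: the ramification of $K(J[p^n])/K$ above~$\mathfrak{p}$ is of bounded depth, and on the formal group $\hat J$ the $p$-adic logarithm
$\log_J \colon \hat J(\mathcal{O}_{\bar K_\mathfrak{p}}) \to \mathrm{Lie}(J)\otimes \bar K_\mathfrak{p}$
linearises the Galois action. Since $P''$ is torsion, $\log_J(P'') = 0$; the problem reduces to showing that, for every $\sigma \in I_\mathfrak{p}$, the difference $\sigma(P'') - P''$ cannot be a nonzero element of the formal group lying on~$C$.

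The main obstacle is precisely this last rigidity step. Coleman's original argument in the hyperelliptic case explicitly $p$-adically integrates differentials of the second kind along an affine model; in general one would need a systematic use of Coleman integration on~$C$ combined with a description of the image of~$C$ inside~$\hat J$, so as to force $\sigma(P'') - P''$ to belong simultaneously to a one-dimensional formal subscheme (from $C \hookrightarrow J$) and to a higher-codimension stratum produced by Fontaine's ramification filtration, forcing it to be zero. Balancing the geometry of $C \hookrightarrow J$ against the arithmetic structure of~$J[p^\infty]$ in sufficient generality is exactly the point that still resists a uniform proof; absent such a substitute for Coleman's explicit integration, the conjecture remains open beyond special families of curves.
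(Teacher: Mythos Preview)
The statement you are attempting to prove is labelled a \emph{conjecture} in the paper, and the paper does not supply a proof of it. It is quoted only as background: immediately after stating it, the authors remark that Coleman established it in special cases (e.g.\ $p \geq 2g+1$ or ordinary reduction at~$\mathfrak{p}$), and then move on. So there is no ``paper's own proof'' to compare against.

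Your proposal is consistent with this state of affairs: it is not a proof but an outline that correctly isolates the easy prime-to-$p$ part (via N\'eron--Ogg--Shafarevich) and then candidly admits that the $p$-part resists a general argument. The sketch you give for the $p$-part --- Fontaine-type ramification bounds plus the formal logarithm plus a hoped-for rigidity of $C$ inside $\hat J$ --- is a reasonable heuristic picture, but the crucial step (``$\sigma(P'')-P''$ cannot be a nonzero element of the formal group lying on $C$'') is precisely the missing ingredient, and you say so yourself. In short: your write-up is an honest status report on an open problem, not a proof, and that matches the paper's treatment exactly.
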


\noindent
Using $p$-adic integration theory, Coleman managed to prove his conjecture in several significant cases, for instance if $p \geq 2g+1$ or if $C$ has ordinary reduction at~$\mathfrak{p}$. The ramification properties of the field generated by $C_{\mathrm{tors}}$ over $K$ provide valuable information in view of a quantitative version of the Manin-Mumford conjecture.  

\begin{theorem}[Coleman, \cite{Coleman85}]
Assume that $\mathfrak{p}$ and $p$ satisfy the hypotheses of Conjecture \ref{coleman}. If $C$ has ordinary reduction at $\mathfrak{p}$ and $J$ has (potential) complex multiplication, then $$|C_{\mathrm{tors}}| \leq pg.$$
\end{theorem}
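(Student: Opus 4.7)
The approach will be $p$-adic: reduce the problem to analyzing the vanishing of Coleman's $p$-adic Abelian integrals on the residue disks of $C$ over $K_\mathfrak{p}$, exploiting both the ordinary hypothesis and the CM structure.

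First, I would invoke the already-established cases of Conjecture \ref{coleman}: in the ordinary setting Coleman has proved the ramification statement, so $K(C_\mathrm{tors})/K$ is unramified at $\mathfrak{p}$ and every $P \in C_\mathrm{tors}$ is rational over the maximal unramified extension $K_\mathfrak{p}^{\mathrm{unr}}$ of the completion. Combined with the ordinary hypothesis, this forces the reduction map $C_\mathrm{tors} \to \tilde C(\overline{\FF}_p)$ to be injective: two points in the same residue disk differ by an element of the formal group $\hat J$, which in the ordinary case is (after unramified base change) a twist of $\hat{\mathbb G}_m^{\,g}$, and $\mu_{p^\infty}(K_\mathfrak{p}^{\mathrm{unr}}) = 0$ since $\QQ_p(\mu_p)/\QQ_p$ is totally ramified; prime-to-$p$ injectivity is the usual good-reduction statement.

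Next, I would fix a basepoint $O \in C(K_\mathfrak{p})$, consider the Abel--Jacobi embedding $\iota : C \hookrightarrow J$, and compose with Coleman's $p$-adic logarithm $\log_J : J(\overline{K_\mathfrak{p}}) \to \mathrm{Lie}(J)$. Since $\log_J$ annihilates all torsion, the composite $F = \log_J \circ \iota$ vanishes on $C_\mathrm{tors}$, and its coordinate functions are Coleman integrals $F_i(P) = \int_O^P \omega_i$ against a basis $\omega_1, \dots, \omega_g \in H^0(C_{\cO_\mathfrak{p}}, \Omega^1)$. The CM by $\cO_F$ on $J$ descends to an action on $\mathrm{Lie}(J)$ and on $H^0(C, \Omega^1)$, and, in the ordinary case, the formal group $\hat J$ splits (after base change) as a product of height-one Lubin--Tate formal $\cO_{F,\mathfrak P}$-modules indexed by the primes $\mathfrak P \mid p$ in $\cO_F$. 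Choosing the $\omega_i$ as eigenforms for this CM action makes the power series expansions on each residue disk coincide with Lubin--Tate logarithms.

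The final estimate is then a Newton polygon count: on each residue disk with local uniformizer $z$, the power series $F_i(z) = c_i z + \sum_{n \ge 2} b_n z^n$ has Newton polygon controlled by the Lubin--Tate structure, so its zeros in the open unit disk are bounded in number. Summing (via the injectivity of reduction) across the disks that carry a torsion point, and organising the count according to the $g$ eigencomponents of the CM decomposition, should yield $|C_\mathrm{tors}| \le pg$.

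The \textbf{main technical obstacle} is the Newton polygon / counting step: per-disk bounds from a single Lubin--Tate logarithm give roughly $p-1$ zeros, and one must argue that the $g$ eigenforms $\omega_1, \dots, \omega_g$ either force at most $p$ zeros per disk or that the number of residue disks supporting a torsion point is controlled by $g$, combining to the factor $pg$. The CM hypothesis is used here in its sharpest form: it is precisely what pins the Newton polygon slopes of the $F_i$ to the Lubin--Tate values $-1/(p-1)$, and without ordinariness lower slopes would appear and the bound would degrade.
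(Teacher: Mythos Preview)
The paper does not contain a proof of this theorem. It is quoted in the introduction (Section~1.1) as background, with attribution to Coleman's original article~\cite{Coleman85}, and no argument is given or even sketched; the present paper's own contributions begin in Section~2 and concern bounds obtained from Serre's homothety theorem and interpolation, not from $p$-adic integration. So there is nothing in the paper to compare your proposal against.

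That said, your outline is recognisably in the spirit of Coleman's actual method in~\cite{Coleman85}: one reduces to residue disks, uses that torsion lies in the zero locus of $p$-adic abelian integrals $\int\omega_i$, and exploits the ordinary and CM hypotheses to control the Newton polygons of the resulting power series. You are honest that the final counting step is where the work lies. Your heuristic ``$g$ eigenforms times $p$ zeros'' is not quite how the bound $pg$ is assembled in Coleman's paper; the sharper input is that in the CM ordinary case one can choose a differential whose integral has a single segment of slope $-1/(p-1)$ on each residue disk, giving at most $p-1$ interior zeros per disk, and the factor $g$ enters through a global count of the relevant disks (essentially via the degree of the canonical class), not through summing $g$ separate eigenform contributions on each disk. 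If you want to complete the argument you should consult Coleman's paper directly for that step rather than the present article.
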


\noindent
The assumptions of the theorem are needed to get a bound of this strength, which is sharp (\cite{Boxall-Grant00,Coleman85}). Using $p$-jets and Coleman's work, Buium gave an almost unconditional estimate, which can be expressed in the following (slightly weaker) form.
\begin{theorem}[Buium, \cite{Buium96}]
If $\mathfrak{p}$ and $p \geq 2g+1$ satisfy the hypotheses of Conjecture~\ref{coleman}: $$|C_{\mathrm{tors}}| \leq (pg)^{4g+2}.$$
\end{theorem}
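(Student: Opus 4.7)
The plan is to leverage Coleman's unramifiedness statement (which he established in the range $p \geq 2g+1$) together with Buium's theory of arithmetic $p$-jets, in order to exhibit an arithmetic differential equation on $J$ that vanishes on every torsion point of $J$ but is nontrivial when restricted to $C$. Weierstrass preparation in the formal group of $J$ will then convert this into a uniform bound on the number of zeros on each residue disk of $C$.

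First I would localize at $\mathfrak{p}$. Since $K/\QQ$ is unramified at $\mathfrak{p}$, $C$ has good reduction there, and $p \geq 2g+1$, Conjecture~\ref{coleman} is a theorem of Coleman, so every point of $C_{\mathrm{tors}}$ is defined over the maximal unramified extension $K_{\mathfrak{p}}^{\mathrm{nr}}$. Writing $W$ for its ring of integers, this embeds $C_{\mathrm{tors}}$ into $C(W) \subset J(W)$, distributing the torsion points among the residue disks lying above $\bar{C}(\bar{\FF}_p)$, where $\bar{C}$ is the reduction of $C$.

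Next I would invoke the $p$-jet space $J^n(J)/W$ of order $n$, a formal $W$-scheme of relative dimension $g(n+1)$ carrying a canonical lift of Frobenius. The heart of the argument is the construction, for $n=2$, of a nonzero $\delta$-character $\psi\colon J(W)\to W$: a $\ZZ_p$-linear map built out of the arithmetic jet coordinates that vanishes on $J_{\mathrm{tors}}$ and whose restriction to $C$ is not identically zero. The existence of such $\psi$ rests on the unit-root splitting of the crystalline cohomology of the reduction of $J$, which is precisely where $p \geq 2g+1$ enters. By construction, the composition $\psi\circ\iota$, where $\iota\colon C\hookrightarrow J$ is the given embedding, is a formal function on $C$ whose zero locus contains $C_{\mathrm{tors}}$.

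Finally, Weierstrass preparation applied to $\psi\circ\iota$ on each residue disk bounds the number of its zeros by its Newton degree, itself controlled by the jet-theoretic degree of $C$ inside $J^2(J)$. Summing over the at most $p + 2g\sqrt{p} + 1$ residue disks (via the Hasse--Weil bound for $\bar{C}$), and tracking the degrees of the jet equations defining $C$ and of the Manin kernel in $J^2(J)$, gives a bound of the shape $(pg)^{c}$. The main obstacle is the sharp Bezout estimate in $J^2(J)$: one must verify that the degrees of the image of $C$ and of the Manin kernel can be controlled in $g$ uniformly enough that the combined exponent lands at $4g+2$. This jet-theoretic bookkeeping, rather than any new input from $p$-adic analysis, is where the specific numerology of the final estimate is pinned down.
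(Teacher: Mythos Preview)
The paper does not prove this theorem. It is quoted in the introduction as a result from the literature (Buium, 1996), preceded by the sentence ``Using $p$-jets and Coleman's work, Buium gave an almost unconditional estimate, which can be expressed in the following (slightly weaker) form.'' No argument is given or even sketched; the statement serves purely as historical context before the paper moves on to its own results. So there is no ``paper's own proof'' for me to compare your proposal against.

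That said, as a sketch of Buium's original argument your outline is broadly on target: unramifiedness of $C_{\mathrm{tors}}$ via Coleman, $p$-jet spaces and a $\delta$-character playing the role of a Manin map, then a count over residue disks. Two cautions if you intend to flesh this out. First, the precise order of the jet space and the exact mechanism producing the nontrivial $\delta$-character are more delicate than your sketch suggests; the relevant input is not simply a unit-root splitting at $n=2$, and you should check Buium's construction rather than assert the order. Second, the bound $(pg)^{4g+2}$ is, as the paper itself signals, a simplified repackaging of Buium's actual estimate; the specific exponent $4g+2$ does not drop out of a clean B\'ezout computation in $J^2(J)$, so do not expect the ``jet-theoretic bookkeeping'' you mention to land exactly there without some slack.
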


\noindent
A suitable choice of $p$ can easily be expressed in terms of the discriminant of $K$ and the conductor of $J$.

\subsection{Uniform bounds in greater dimension}
Let now $A$ be an abelian variety of dimension $g$, defined over a number field $K$ and equipped with an ample line bundle $\mathcal{L}$, so that we can define the degree of a subvariety $V$ of $A$. The number of maximal torsion cosets associated to $V$ can be bounded in terms of $V$ and $A$. In fact, Bombieri and Zannier \cite{Bombieri-Zannier96} showed that an {\it uniform} estimate can be found, where the dependence on $V$ is reduced to its geometric degree.

Later, Hrushovski gave a new proof of the Manin-Mumford conjecture through model theory, which yielded an explicit uniform bound.
\begin{theorem}[Hrushovski, \cite{Hrushovski01}]
The number $T$ of maximal torsion cosets of a subvariety $V$ of $A$ can be bounded as follows:
$$T \ll_A \deg(V)^{\alpha(A)},$$
where $\alpha(A)>0$.
\end{theorem}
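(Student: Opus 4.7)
The plan is to combine Serre's theorem on homotheties in the image of the Galois representation on torsion points of $A$ with an algebraic interpolation argument. The broad strategy is to play an \emph{upper bound} coming from interpolation against a \emph{lower bound} coming from Serre: Serre's theorem forces the Galois orbit of any torsion coset in $V$ to be large, while interpolation bounds how many torsion points of a given order a subvariety of (a quotient of) $A$ can contain. Matching the two bounds controls how many maximal torsion cosets can fit into a variety of degree $\deg(V)$.

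More precisely, fix a maximal torsion coset $B+t\subset V$ and let $\pi_B:A\to A/B$ be the quotient, so that $\bar t:=\pi_B(t)$ is an isolated torsion point of $W:=\pi_B(V)$ of some exact order $n$. By Serre's theorem there is $\ell_0=\ell_0(A)$ such that for every prime $\ell>\ell_0$ the image of $\Gal(\bar K/K)$ in $\mathrm{GL}(A[\ell^k])$ contains all homotheties $(\ZZ/\ell^k)^\times\cdot\Id$; since homotheties on $A[n]$ pass to homotheties on $(A/B)[n]$, each $u\in(\ZZ/n)^\times$ can be realised as the action of some $\sigma_u\in\Gal(\bar K/K)$ on $\bar t$. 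As $V$ and $B$ are Galois-stable, the cosets $B+ut$ are all contained in $V$ and are again maximal, yielding $\phi(n)\gg n/\log\log n$ distinct maximal torsion cosets of type $B$ in $V$.

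On the interpolation side, a Philippon-type zero estimate (or an algebraic interpolation \`a la Masser--W\"ustholz), applied to a section of a high tensor power of the polarisation on $A/B$ vanishing on $W$, bounds the number of torsion points of order dividing $n$ in $W$ by $C\cdot\deg(W)^{c_1}\cdot n^{c_2}$, with exponents depending only on $\dim A$; here $\deg(W)$ is controlled by $\deg(V)$ and $\deg(B)$. Combining with the Serre lower bound forces the maximal order $n$ to be bounded polynomially in $\deg(V)$, and then a Bogomolov/B\'ezout-type bound for torsion points in $W$ of that bounded order controls the total number of maximal torsion cosets of type $B$ in $V$. Summing over the finitely many possible types $B$---whose number and degrees are controlled by an induction on $\dim A$---yields the desired polynomial bound $T\ll_A\deg(V)^{\alpha(A)}$.

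The main obstacle is arranging for the interpolation step to produce an estimate that is \emph{uniform in the type} $B$: zero estimates are most comfortable in a fixed ambient polarised abelian variety, whereas here the quotient $A/B$ varies with the coset. One remedies this either by working directly inside $A$, using a B\'ezout-type bound on intersections $V\cap[n]^{-1}(V)$, or by replacing the quotient by a suitable isogeny factor of $A$; both routes add technical overhead. A secondary subtlety is that the Serre--interpolation balance a priori bounds only the maximum order $n$, not the total number of cosets; passing from one to the other requires a further d\'evissage and a uniform control of torsion points of bounded order, exactly the type of estimate that the interpolation argument is designed to provide.
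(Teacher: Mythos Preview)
This theorem is a cited result of Hrushovski; the paper does not reprove it directly but instead establishes the sharper Corollary~\ref{main corollary} (with $\alpha(A)=g$), whose proof is what one should compare against.

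Your route and the paper's are genuinely different. The paper never attempts to bound the order of a torsion point via a Galois-orbit/zero-estimate balance. Instead, for each irreducible non-torsion $V$ it uses Serre's theorem to manufacture \emph{specific} Galois automorphisms $\sigma,\rho$ acting as $[(2+\cdots)^c]$ or $[(1+\cdots)^c]$ on suitable torsion subgroups, and from these builds an explicit auxiliary variety
\[
V'=\bigcup_{P}\,[2^{c}]^{-1}(V^{\sigma}+P)\;\cup\;\bigcup_{P}(V+P)\;\cup\;\bigcup_{P}(V^{\rho}+P)
\]
with $V_{\mathrm{tors}}\subset V\cap V'\subsetneq V$ (Proposition~\ref{prop:V'}, Lemma~\ref{lemma:V'-stabilizer}). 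The Hilbert-function estimates of Chardin and Chardin--Philippon (Theorems~\ref{thm:HilbertUpperBound} and~\ref{thm:HilbertLowerBound}) then interpolate $V'$ by a single hypersurface $Z$ of degree $\ll_A\delta_0(V)$ with $V_{\mathrm{tors}}\subset V\cap Z\subsetneq V$ (Proposition~\ref{prop:Z}), and an inductive descent with a weighted B\'ezout (Lemma~\ref{lemma:HA3}, Proposition~\ref{prop:Theorem}, Theorem~\ref{THEOREM}) finishes. No quotient $A/B$, no bound on orders, no counting of Galois orbits.

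Your sketch has a real gap at the interpolation step. You assert a bound of the shape
\[
\#\{\text{torsion points of order dividing }n\text{ in }W\}\le C\,\deg(W)^{c_1}\,n^{c_2},
\]
and then ``combine with the Serre lower bound'' $\phi(n)\gg n/\log\log n$ to force $n$ to be polynomially bounded in $\deg(V)$. For that combination to say anything you need $c_2<1$, but no Philippon- or Masser--W\"ustholz-type zero estimate produces such an exponent: a zero estimate controls the degree of an auxiliary hypersurface, not the cardinality of $W\cap(A/B)[n]$, and in general a $d$-dimensional $W$ can meet $(A/B)[n]$ in $\asymp n^{2d}$ points (think $W$ a product of subtori in a CM situation, or simply apply B\'ezout to $W\cap[n]^{-1}\{0\}$). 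So the inequality you want does not exist in the generality you invoke, and the balance collapses. A secondary issue is that Serre's theorem (Theorem~\ref{homotheties} here) does \emph{not} give all homotheties in $(\ZZ/\ell^k)^\times$ for large $\ell$; it gives only $c(A)$-th powers of homotheties for an unspecified $c(A)$, so your claim ``each $u\in(\ZZ/n)^\times$ can be realised as $\sigma_u$'' overstates what is available. These two points together mean that, as written, the argument does not close; the paper avoids both difficulties by never trying to bound $n$ and by using Serre's automorphisms constructively rather than as a counting lower bound.
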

    
\noindent
{\it Remarks.} The symbol $\ll_A$ means that the stated inequality is true after possibly multiplying the right member by a positive real number depending on $A$. The number $\alpha(A)$ is doubly exponential in $g$, and it also depends on a prime of good reduction for $A$.    
  
\medskip

A breakthrough came with the work of Amoroso and Viada on the effective Bogomolov conjecture for subvarieties of an algebraic torus.  They observed that it was more relevant in this setting to describe the geometry of $V$  with another parameter. Let $S$ be a semi-abelian variety; since it is quasi-projective, we can define the degree of any subvariety of $S$.
\begin{definition}\label{def:delta}
If $V$ is a subvariety of $S$, let $\delta(V)$ be the smallest $d $ such that $V$ is the intersection of hypersurfaces of $S$ which have degree at most $d$. 
\end{definition}
 
\noindent 
Let $V$ be a subvariety of $S$ and $V_{\mathrm{tors}}:= V(\bar{K}) \cap S_{\mathrm{tors}}$ be its torsion subset. It is possible to bound the degree of the $j$-equidimensional part $V_{\mathrm{tors}}^j$ of the Zariski closure $\overline{V_{\mathrm{tors}}}$ of $V_{\mathrm{tors}}$, for an integer $0 \leq j \leq \mathrm{dim}(V)$.

In the case of tori, Amoroso and Viada's theorem concerns the Zariski density of points of small height, but it has the following consequence.
\begin{theorem}[Amoroso-Viada, \cite{Amoroso-Viada09}]\label{Amoroso-Viada}
Let $V \subset \mathbb{G}_m^n$ and $0 \leq j \leq \mathrm{dim}(V)$. For any $\varepsilon >0$: $$\deg(V_{\mathrm{tors}}^j) \ll_{n, \varepsilon} \delta(V)^{n-j+ \varepsilon}.$$
\end{theorem}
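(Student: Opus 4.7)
The plan is to treat the theorem as a geometric consequence of the main arithmetic result of Amoroso--Viada: for every irreducible subvariety $W\subset\Gm^n$ not contained in a proper translate of a subtorus, and every $\varepsilon>0$,
$$\hat{\mu}^{\mathrm{ess}}(W)\gg_{n,\varepsilon}\delta(W)^{-1/\codim(W)-\varepsilon}.$$
Since torsion points have zero height, such a lower bound is incompatible with a Zariski-dense torsion subset, so this inequality yields a quantitative version of Raynaud's theorem for $\Gm^n$ and, applied to well-chosen auxiliary subvarieties, controls the degree of each equidimensional part of~$\overline{V_{\mathrm{tors}}}$.

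Concretely, I would write $V_{\mathrm{tors}}^{\,j}=\bigcup_{i=1}^{N}\zeta_i H_i$ as a finite union of torsion cosets of dimension exactly $j$, where each $H_i$ is a subtorus of dimension~$j$ and $\zeta_i$ a torsion point, so that
$$\deg(V_{\mathrm{tors}}^{\,j})=\sum_{i=1}^{N}\deg(H_i).$$
Group these cosets by their underlying subtorus: for a fixed $H$ of dimension $j$, let $N_H$ be the number of cosets $\zeta H\subseteq V$, so that the sum becomes $\sum_H N_H\deg(H)$. Passing to the quotient $\pi_H\colon\Gm^n\to\Gm^n/H\cong\Gm^{n-j}$, the cosets $\zeta H$ contained in $V$ project to $N_H$ distinct torsion points of $\pi_H(V)$, whose cardinality is bounded by the $j=0$ case of the statement applied to $\pi_H(V)$ (a subvariety of a torus of dimension $n-j$, whose defining-degree parameter can be controlled in terms of $\delta(V)$). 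On the other hand, the essential-minimum inequality restricts which subtori $H$ can occur: a subtorus of large degree can only appear as the stabiliser of a component of $V$ that is itself essentially a torsion coset, and this forces $V$ to have a non-generic geometry. Running this dichotomy and optimising the tradeoff between $N_H$ and $\deg(H)$ delivers the exponent $n-j+\varepsilon$.

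The main obstacle is extracting the sharp exponent. Bezout in $\Gm^n$ already suggests $n-j$, obtained by intersecting $n-j$ hypersurfaces of degree $\delta(V)$; the arithmetic input must therefore not lose more than an arbitrarily small power of $\delta(V)$, which is precisely the content of the optimal constant $1/\codim$ in the essential-minimum inequality. Propagating this sharpness through the grouping-and-projection step is the delicate part, because the parameter $\delta$ is not monotone under projection and one must either work throughout with the defining equations of $V$ or absorb the growth of $\delta(\pi_H(V))$ into the $\varepsilon$. A secondary combinatorial difficulty is that different torsion cosets may share components of their stabilisers, so the sum $\sum_H N_H\deg(H)$ has to be organised carefully to avoid double-counting when invoking the essential-minimum bound.
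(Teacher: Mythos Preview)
The paper does not give its own proof of this statement; it is quoted as a consequence of the main theorem of \cite{Amoroso-Viada09} on the essential minimum. The way that consequence is actually derived there---and the way the present paper derives its abelian analogue in Theorem~\ref{THEOREM}---is \emph{not} by grouping torsion cosets by their underlying subtorus and projecting. It is a descending induction on dimension entirely inside the ambient torus: for each irreducible $W\subset V$ not contained in a torsion coset, the small-height input is converted, via Hilbert-function interpolation (the analogue of Proposition~\ref{prop:Z}), into a single hypersurface $Z$ of degree $\ll_{n,\varepsilon}\delta(V)^{1+\varepsilon}$ with $W_{\mathrm{tors}}\subset W\cap Z\subsetneq W$; one intersects, drops the dimension by one, and at the end applies a weighted B\'ezout inequality as in Lemma~\ref{lemma:HA3}. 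All the bookkeeping is through a single parameter $\theta$, and no quotient tori are ever needed.

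Your projection scheme has a real gap at the summation step. Even if you grant yourself the $j=0$ case in $\Gm^{n-j}$ (which already requires the full interpolation argument, so nothing has been saved), you must sum $N_H\deg(H)$ over \emph{all} subtori $H$ that occur as stabilisers of maximal $j$-dimensional torsion cosets in $V$. There is no a priori bound on the number of distinct $H$, and the essential-minimum inequality constrains a single irreducible component, not the collection of subtori across components; the ``dichotomy and optimisation'' you allude to does not by itself control $\sum_H\deg(H)$. On top of this, bounding $\delta(\pi_H(V))$ in terms of $\delta(V)$ uniformly in $H$ is precisely the monotonicity failure you flag but do not resolve, and absorbing it into $\varepsilon$ would need a bound on $\deg(H)$ that you have not produced. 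The interpolation-and-descent route sidesteps both problems because it never changes the ambient torus and packages everything into one weighted degree inequality.
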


\noindent
Their result was later improved by the second author. Mixing their strategy with ideas of Beukers and Smyth \cite{Beukers-Smyth02} more suited to the study of torsion points, one may find a bound with optimal dependance on $\delta(V)$.
\begin{theorem}[Mart\'inez, \cite{Martinez17}]
If $V \subset \mathbb{G}_m^n$ and $0 \leq j \leq \mathrm{dim}(V)$, then
\[
\deg(V_{\mathrm{tors}}^j) \leq 2^{6n^{3}}  \delta(V)^{n-j}.
\]
\end{theorem}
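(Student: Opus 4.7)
The plan is to follow the Amoroso--Viada framework but replace their height-based input by the algebraic substitution technique of Beukers--Smyth, thereby removing the $\varepsilon$-loss and producing an explicit constant depending only on $n$.

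First, by Raynaud's theorem in the toric setting, $V_{\mathrm{tors}}^j$ decomposes as a finite union $\bigcup_{i=1}^T \tau_i B_i$ of maximal $j$-dimensional torsion cosets, so the goal is to bound $\sum_i \deg(B_i)$. Pick a prime $p$ unramified in the field of definition of $V$ (and the cyclotomic fields involved). For a torsion point $\zeta \in \tau_i B_i$ of order coprime to $p$, the point $\zeta^p$ is a Galois conjugate of $\zeta$ via the $p$-power Frobenius on roots of unity, hence lies in $V$. Such points are Zariski dense in $\tau_i B_i$, so taking closure gives
\[
[p](\tau_i B_i) = \tau_i^p B_i \subseteq V, \quad \text{hence} \quad \tau_i B_i \subseteq [p]^{-1}V,
\]
and in particular $V_{\mathrm{tors}}^j \subseteq V \cap [p]^{-1}V$. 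If $V$ is cut out by equations of degree at most $\delta := \delta(V)$, then $[p]^{-1}V$ is cut out by the pullbacks $f_\ell(x_1^p, \ldots, x_n^p)$, each of degree at most $p\delta$.

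The main step is to use these pulled-back equations to cut down to dimension $j$. I would intersect $V$ successively with $\dim V - j$ of them, chosen at each stage so that the intersection drops dimension by one while still containing $V_{\mathrm{tors}}^j$ (which automatically satisfies every equation of $[p]^{-1}V$). When this is achievable, the toric Bezout inequality yields a subvariety $W \supseteq V_{\mathrm{tors}}^j$ of dimension $j$ with
\[
\deg(W) \leq \deg(V)(p\delta)^{\dim V - j} \leq \delta^{n - \dim V}(p\delta)^{\dim V - j} = p^{\dim V - j}\delta^{n-j} \leq p^n\delta^{n-j}.
\]
Since $V_{\mathrm{tors}}^j$ is equidimensional of dimension $j$, this gives $\deg(V_{\mathrm{tors}}^j) \leq p^n\delta(V)^{n-j}$.

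The main obstacle is guaranteeing that the cutting process just described can actually be carried out---equivalently, ensuring that no intermediate component is contained in $[p]^{-1}V$, i.e.\ stable under $[p]$ modulo $V$. Any such obstruction forces the offending component to be itself a union of torsion cosets, whose combinatorial structure is controlled purely in terms of $n$. A counting argument shows that the ``bad'' primes at each step are few and effectively bounded, so a prime $p \leq 2^{6n^2}$ that avoids every degenerate configuration can be produced by pigeonholing over the intermediate intersections together with an effective Bertrand-type lemma; this yields the announced constant $2^{6n^3}$. The delicate point is making the choice of $p$ truly uniform in $\delta(V)$: the upper bound on $p$ must come solely from the torus-combinatorial data, never from the defining equations of $V$.
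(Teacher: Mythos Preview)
The paper does not prove this statement. It is quoted in the introduction as a prior result of the second author (reference~\cite{Martinez17}) in order to motivate the paper's main theorem, which is the abelian analogue. There is therefore no proof here to compare your proposal against.

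On the proposal itself: the overall shape (Amoroso--Viada descent combined with a Beukers--Smyth substitution) is indeed the strategy of~\cite{Martinez17}, but two steps do not work as you have written them. First, the assertion that $\zeta^p$ lies in $V$ because it is a Galois conjugate of $\zeta$ requires the automorphism $\zeta\mapsto\zeta^p$ of the cyclotomic field to fix $V$; choosing ``$p$ unramified in the field of definition of $V$'' does not achieve this, and in any case would make $p$ depend on $V$, destroying uniformity. In the actual argument one fixes $p=2$: for torsion points of odd order the map $\zeta\mapsto\zeta^2$ is a genuine Galois automorphism over $\QQ$, while the even part of the order is absorbed by finitely many translates by $2$-torsion points, exactly as in Beukers--Smyth. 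The auxiliary variety is thus a union of $[2]^{-1}(V^\sigma+\eta)$ and $V^\sigma+\eta$ over $\eta\in\Gm^n[2]$ and a bounded set of conjugates, not a single $[p]^{-1}V$.

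Second, your claim that an irreducible component $W$ satisfying $W\subset[p]^{-1}V$ must be a union of torsion cosets is false in general, so the ``pigeonhole over bad primes'' paragraph does not go through. What actually drives the induction is an interpolation step (Hilbert-function bounds \`a la Chardin--Philippon): whenever $W$ is irreducible and \emph{not} a torsion coset, one produces a hypersurface of degree $\ll_n\delta_0(W)$ containing $W_{\mathrm{tors}}$ but not $W$. Iterating this across codimensions is what generates the constant $2^{6n^3}$; it does not come from bounding a prime.
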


\noindent
A straightforward consequence of this theorem is an estimate on the number of maximal cosets. A further study provides a variant of this bound which proves conjectures of Aliev and Smyth~\cite{Aliev-Smyth:2012}, and Ruppert~\cite{Ruppert:1993}.

\subsection{New bounds for the torsion on subvarieties of abelian varieties} Our main theorem is an estimate of the same strength for subvarieties of an abelian variety $A$ of dimension $g \geq 1$ defined over a number field (with fixed embedding in projective space).

Combining algebraic interpolation with a theorem of Serre on homotheties in the Galois representation associated to the torsion points of $A$, we prove the following bound.
\begin{theorem}
\label{main theorem}
If $V$ is a subvariety of $A$ and $0 \leq j \leq \mathrm{dim}(V)$, then
\[
\deg(V_{\mathrm{tors}}^j) \ll_A  \delta(V)^{g-j}.
\]
\end{theorem}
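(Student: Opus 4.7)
The plan is to adapt, in the abelian setting, the strategy developed by Amoroso--Viada and Mart\'inez for the toric case, replacing the explicit action of Galois on roots of unity by Serre's theorem on homotheties in the image of $\Gal(\bar K/K)$ acting on $\Ators$.

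First, I would fix a maximal torsion coset $C = B + \zeta \subseteq V$ of dimension $j$, where $B$ is an abelian subvariety of $A$ (the identity component of the stabilizer of $C$), so $\dim B = j$, and $\zeta$ is a torsion point of order $n$ modulo $B$. Serre's theorem provides an integer $c = c(A)$ such that, for every integer $n$ coprime to a fixed finite set of bad primes $S(A)$ depending only on $A$, the image of $\Gal(\bar K/K)$ in $\mathrm{Aut}(A[n])$ contains every homothety $\lambda\cdot\mathrm{Id}$ with $\lambda \in (\ZZ/n\ZZ)^*$ a $c$-th power. Since $V$ is defined over $K$ and $B$ is stable under multiplication by any $\lambda$, each such $\sigma_\lambda$ carries $C$ to a maximal torsion coset $B + \lambda\zeta \subseteq V$, and these cosets are pairwise distinct as $\lambda$ runs through the $c$-th powers. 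This produces at least $\varphi(n)/c$ maximal torsion cosets of $V$ of dimension $j$ sharing the stabilizer component $B$.

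Second, I would use algebraic interpolation to bound the corresponding sum of degrees. Pushing forward under the quotient $\pi\colon A \to A/B$, the coset $C$ maps to a torsion point $\bar\zeta$ of order $n$ in the image $V' := \pi(V)$, and the $\varphi(n)/c$ cosets above descend to distinct torsion points $\lambda\bar\zeta \in V'$. Because $V$ is cut out by hypersurfaces of degree $\leq \delta(V)$, one checks that $V'$ is cut out in $A/B$ by hypersurfaces of degree $\ll_A \delta(V)$, so that a B\'ezout-type estimate yields $\deg(V') \ll_A \delta(V)^{\codim_{A/B}(V')}$. Bounding the number of isolated torsion conjugates $\lambda\bar\zeta$ by $\deg(V')$, using that $\deg(C) = \deg(B)$ is bounded by an $A$-dependent constant for any abelian subvariety $B$, and summing over the finitely many $A$-bounded stabilizer types $B$, one obtains $\deg(V_{\mathrm{tors}}^j) \ll_A \delta(V)^{g-j}$.

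The main obstacle is to obtain the clean exponent $g-j$ with no $\varepsilon$-loss: a crude B\'ezout is not enough, and one must perform a precise pigeonhole argument on auxiliary polynomials vanishing to high order on $V$, in the style of Mart\'inez. A secondary technical point is absorbing the finite bad set $S(A)$ of Serre's theorem into the implicit constant; this one handles by splitting each torsion point into its part of order prime to $S(A)$ and its complementary part (the latter of order bounded by an $A$-dependent constant), and carefully tracking how $\delta(V')$ relates to $\delta(V)$ under the quotient $\pi$.
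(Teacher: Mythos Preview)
Your approach has a fatal gap. You assert that $\deg(B)$ is bounded by an $A$-dependent constant for every abelian subvariety $B\subset A$, and that there are only finitely many ``stabilizer types'' $B$ to sum over. Both claims are false: already in $A=E\times E$ for an elliptic curve $E$, the graphs $\Gamma_n=\{(x,[n]x):x\in E\}$ give infinitely many one-dimensional abelian subvarieties with $\deg(\Gamma_n)$ of order $n^2$. So neither the number of summands nor the degree contribution of each is controlled, and the counting argument does not close. There are further unresolved points: the theorem does not assume $V$ is defined over $K$, and enlarging $K$ to $K_V$ would make Serre's constant $c(A)$ depend on $V$, destroying uniformity; the control of $\delta(\pi(V))$ in terms of $\delta(V)$ under the quotient $\pi\colon A\to A/B$ is not established (it depends on a projective embedding of $A/B$, which itself varies with $B$); and the images $\lambda\bar\zeta$ need not be \emph{isolated} torsion points of $\pi(V)$, since a positive-dimensional torsion coset of $\pi(V)$ through $\bar\zeta$ need not lift to a torsion coset of $V$ containing $C$.

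The paper proceeds quite differently, working from above rather than from the torsion cosets. For each irreducible non-torsion $W$, Serre's theorem is used to build an explicit auxiliary set $W'$ (a finite union of translates of $W$, of a Galois conjugate $W^{\rho}$, and of $[2^{c}]^{-1}$-pullbacks of a conjugate $W^{\sigma}$) with $W_{\mathrm{tors}}\subset W\cap W'\subsetneq W$; Hilbert-function interpolation (Chardin; Chardin--Philippon) then replaces $W'$ by a single hypersurface $Z$ of degree $\ll_A \delta_0(W)$ with the same property. A double induction on dimension, combined with a weighted B\'ezout inequality, yields the clean exponent $g-j$. No quotient by abelian subvarieties is taken, and no enumeration of them is needed --- which is precisely what lets the argument stay uniform in $V$.
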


\noindent
This immediately translates into a bound for the number of maximal torsion cosets. In the equidimensional case, this only depends on the degree of $V$.
\begin{corollary}\label{main corollary}
The number $T$ of maximal torsion cosets of a subvariety $V$ of $A$ is bounded as follows:
$$T \ll_A \deg(V)^{g}.$$
\end{corollary}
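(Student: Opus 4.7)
The plan is to derive Corollary \ref{main corollary} from Theorem \ref{main theorem} by combining two ingredients: a bound on the number of maximal torsion cosets of a given dimension by the degree of the corresponding equidimensional part of $\overline{V_{\mathrm{tors}}}$, and a uniform comparison of $\delta(V)$ with $\deg(V)$.

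For the first step, I would argue as follows. By Raynaud's theorem every irreducible component of $\overline{V_{\mathrm{tors}}}$ is a torsion coset, so the maximal torsion cosets of dimension $j$ contained in $V$ are exactly the irreducible components of $V_{\mathrm{tors}}^j$. Since $A$ is embedded in projective space via an ample line bundle $\cL$, every irreducible closed subvariety has degree at least $1$ (by Nakai--Moishezon in positive dimension, and by convention at a point). Letting $T_j$ denote the number of maximal torsion cosets of dimension $j$, the sum of the degrees of the components of $V_{\mathrm{tors}}^j$ therefore dominates $T_j$, and Theorem~\ref{main theorem} gives
\[
T_j \leq \deg(V_{\mathrm{tors}}^j) \ll_A \delta(V)^{g-j}.
\]
Summing over $0 \leq j \leq \dim V$ yields $T \ll_A \sum_{j=0}^{g} \delta(V)^{g-j}$.

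The second step is to establish a uniform bound $\delta(V) \ll_A \deg(V)$. This reduces to the following classical fact about projective embeddings: for every closed $V \subsetneq A \subset \PP^N$ and every $P \in A \setminus V$, there exists a hypersurface of $A$ of degree $O_A(\deg V)$ containing $V$ but not $P$. One can produce such a hypersurface by a generic linear projection of $V$ onto $\PP^{\dim V + 1}$, whose image is a hypersurface of degree at most $\deg V$ that then pulls back to $A$; alternatively, one may invoke Castelnuovo--Mumford regularity bounds for the ideal sheaf $\cI_V$. Letting $P$ range over $A \setminus V$ provides a family of hypersurfaces of degree $\ll_A \deg(V)$ cutting out $V$ set-theoretically, so $\delta(V) \leq c_A \deg(V)$ for a constant depending only on $(A,\cL)$.

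Combining the two steps and using $\deg(V) \geq 1$ to collapse the geometric sum produces $T \ll_A \deg(V)^g$, as required. The main (and essentially only) obstacle is the uniform comparison $\delta(V) \ll_A \deg(V)$: the implicit constant must depend only on the embedding of $A$ and not on $V$, so the projection or regularity argument has to be carried out carefully enough to avoid an unwanted dependence on the geometry of $V$ itself.
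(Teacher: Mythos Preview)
Your proposal is correct and follows essentially the same route as the paper: bound $T$ by $\sum_j \deg(V_{\mathrm{tors}}^j)$, apply Theorem~\ref{main theorem} to each summand, and then replace $\delta(V)$ by $\deg(V)$ via a uniform comparison. The only difference is that you re-derive the inequality $\delta(V)\ll_A\deg(V)$ from scratch, whereas the paper has already recorded it as Lemma~\ref{delta-ineq} (the projection argument you sketch is exactly the proof given there, yielding $\delta(V)\le\deg(A)\,\delta_1(V)\le\deg(A)\,\deg(V)$ for equidimensional~$V$).
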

 
\noindent
In the case of curves, we get an improved bound.
\begin{theorem}
\label{theorem curves}
Let $C$ be a curve in $A$ which is not a torsion coset. Then
$$|C_{\mathrm{tors}}| \ll_A \deg(C)^2.$$
\end{theorem}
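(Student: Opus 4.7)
The plan is to combine Serre's theorem on homotheties --- the essential input of Theorem~\ref{main theorem} --- with Bezout's inequality on $A$, specialized to the case of curves to yield the degree bound. The point is that Theorem~\ref{main theorem} applied to a curve only gives $|C_{\mathrm{tors}}| \ll_A \delta(C)^g$, which for $g \geq 3$ is too weak, so one needs an argument tailored to codimension~$g-1$.

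The geometric core is as follows. For any integer $\ell \geq 2$, one computes $\deg([\ell]^{-1}(C)) = \ell^{2(g-1)} \deg(C)$ using $[\ell]^{*} L \cong L^{\otimes \ell^2}$ for the chosen symmetric ample line bundle $L$. Since $C$ is not a torsion coset, a classical rigidity result for abelian varieties implies $[\ell](C) \neq C$ for every $\ell \geq 2$, so $C$ and $[\ell]^{-1}(C)$ are distinct curves in $A$. Their intersection is therefore $0$-dimensional, and Bezout's inequality (applied after a projective embedding) yields
\[
|C \cap [\ell]^{-1}(C)| \;\leq\; \deg(C) \cdot \deg([\ell]^{-1}(C)) \;=\; \ell^{2(g-1)} \deg(C)^{2}.
\]

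To bring torsion into this intersection, one invokes Serre's theorem: there exists a bounded integer $M = M(A)$ such that, uniformly in $n$ coprime to a finite set of bad primes $S(A)$, the image of $\Gal(\bar{K}/K) \to \mathrm{Aut}(A[n])$ contains the homothety $[\lambda^{M}]$ for every integer $\lambda$ coprime to $n$. Consequently, for each $x \in C_{\mathrm{tors}}$ of such order and each valid $\lambda$, some $\sigma \in \Gal(\bar{K}/K)$ satisfies $\sigma(x) = [\lambda^{M}](x)$. Since $C$ is $K$-rational, $[\lambda^{M}](x) \in C$, i.e. $x \in C \cap [\lambda^{M}]^{-1}(C)$. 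Choosing $\lambda$ from a small bounded set of primes (depending only on $A$) that collectively covers the prime factorizations of all torsion orders appearing in $C_{\mathrm{tors}}$, and adding the $O_{A}(1)$ contribution from torsion of order in $S(A)$, one obtains $|C_{\mathrm{tors}}| \ll_{A} \deg(C)^{2}$.

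The main obstacle is this last uniformity step: producing a bounded (in $A$ only) family of homotheties $[\lambda^{M}]$ whose Bezout bounds together account for every torsion point of $C$. This is delicate because a single $\lambda$ can only serve torsion orders coprime to $\lambda$, and naive use of arbitrarily many primes would make $\ell^{2(g-1)}$ grow with $\deg(C)$. Resolving it relies on the strong adelic form of Serre's homothety theorem, combined with a careful analysis --- parallel to the one carried out in the proof of Theorem~\ref{main theorem} --- of how the prime-power structure of torsion orders interacts with the Galois action.
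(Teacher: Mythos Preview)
Your overall architecture is the same as the paper's: use Serre's homothety theorem to trap $C_{\mathrm{tors}}$ inside $C\cap C'$ for an auxiliary curve $C'$ whose degree is $\ll_A \deg(C)$, then apply B\'ezout. But two concrete steps are not actually carried out, and the substitutes you suggest do not work.

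\textbf{First gap: $C$ need not be defined over $K$.} You write ``Since $C$ is $K$-rational, $[\lambda^{M}](x)\in C$''. Nothing in the statement guarantees this. If $\sigma\in\Gal(\bar K/K)$ acts on $x\in C$ as $[\lambda^{M}]$, then $[\lambda^{M}](x)=\sigma(x)\in C^{\sigma}$, so you only get $x\in [\lambda^{M}]^{-1}(C^{\sigma})$. The conjugate $C^{\sigma}$ depends on $\sigma$, hence a priori on the order of $x$; this is precisely why the paper splits off the case $K_X\not\subset K_{\mathrm{tors}}$ (a single conjugate plus B\'ezout gives $\deg(C)^2$ directly) from the case $K_X\subset K_{\mathrm{tors}}$. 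You cannot bypass this by enlarging $K$ to contain $K_C$, since then the Serre constant would depend on $C$.

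\textbf{Second gap: the ``bounded set of primes'' idea fails.} For any finite set $\{p_1,\dots,p_k\}$ there are torsion orders divisible by every $p_i$, so no $\lambda\in\{p_1,\dots,p_k\}$ is coprime to such an order and the homothety you need is unavailable. The paper does not cover torsion orders by varying the prime; it fixes the prime $2$ and, through a $2$-adic analysis (Lemma~\ref{binomials}, Proposition~\ref{prop:V'}), produces exactly two Galois elements $\sigma,\rho$, independent of the torsion point, such that every $Q\in X_{\mathrm{tors}}$ lands in one of boundedly many translates of $[2^{c}]^{-1}X^{\sigma}$, $X$, or $X^{\rho}$. Pulling back via the stabilizer quotient (Lemma~\ref{lemma:V'-stabilizer}) then gives a fixed $C'$ with $\deg(C')\ll_A\deg(C)$ and $C\not\subset C'$. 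Your final paragraph defers this to ``a careful analysis parallel to the one in the proof of Theorem~\ref{main theorem}'', but that proof itself rests on the same Proposition~\ref{prop:V'}; the deferral is circular, and the construction of $\sigma,\rho$ is the entire content of the argument.
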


\noindent
The dependence on $A$ in these three estimates will be explicited below in terms of the constant that appears in Serre's theorem (which is still rather mysterious). 

\medskip

\noindent
{\it Remark.} Some results on the effective Bogomolov problem (\cite{Galateau10} under a conjecture of Serre on the ordinary primes of $A$, or \cite{Galateau12} for $V$ a hypersurface) may be combined with Amoroso and Viada's method to yield explicit bounds which are polynomial in~$\delta(V)$ but weaker than Theorem~\ref{main theorem} (or even an abelian analogue of Theorem~\ref{Amoroso-Viada}). It is not suprising since this approach does not fully exploit the properties of torsion points.

\medskip

The article is organized as follows. In the next section, we discuss and study alternate measures for the degree of subvarieties of $A$, introduce Hilbert functions and recall the classical upper bound (resp. lower bound) proved by Chardin (res. Chardin and Philippon).

In the third section, we state Serre's theorem, which is a first step towards a famous conjecture of Lang on homotheties in the Galois group of the extensions generated by torsion points of $A$. We use it several times to locate the torsion subset $V_{\mathrm{tors}}$ of an irreducible subvariety $V$ of $A$ that is not a torsion coset. We thus show that $V_{\mathrm{tors}} \subset V'$, where $V'$ is an algebraic set that satisfies some important properties and can be described precisely in terms of~$V$. We then prove Theorem \ref{theorem curves}, where algebraic interpolation is not needed and a simple application of B\'ezout's theorem is sufficient to conclude.

In the last section, our estimates on Hilbert functions allow us to interpolate $V'$ by a hypersurface of $A$ retaining most of the crucial information contained in~$V'$. We give a proof of Theorem \ref{main theorem}, and we will finally discuss its optimality in terms of~$\delta(V)$.

\medskip

\subsection*{Conventions}
Unless stated otherwise, we fix throughout this paper an abelian variety $A$ of dimension $g \geq 1$ defined over a number field~$K$.
We also fix an ample line bundle $\mathcal{L}$ on~$A$. After possibly replacing $\mathcal{L}$ by $\mathcal{L}^{\otimes 3}$, we will assume that $\mathcal{L}$ is very ample and defines a normal embedding into some projective space~$\PP^n$. In addition, after possibly tensorizing $\mathcal{L}$ by $\mathcal{L}^{-1}$, we may assume that $\mathcal{L}$ is symmetric. By abuse of language, we will say in this article that a real number depends on~$A$ when it depends on both $A$ and~$\mathcal{L}$.

A projective embedding being fixed, we may now identify every subvariety  $V$ of $A$ (not necessarily irreducible or equidimensional) with its image in~$\PP^n$. The field of definition of $V$ will be denoted by $K_V$. We will say that $V$ is {\it non-torsion} if it is not a torsion coset.

We also let $A_{\mathrm{tors}}$ be the torsion subgroup of $A$ over $\bar{K}$, and $K_{\mathrm{tors}}$ the field generated over $K$ by $A_{\mathrm{tors}}$. If $l$ is a positive integer, the $l$-torsion subgroup of $A$ will be denoted by $A[l]$, and its field of definition by $K_l$.

\section{Geometric preliminaries}

 Our approach relies strongly on fine interpolation results which follow from estimates on the Hilbert function proved by Chardin and Philippon. Before stating them at the end of this section, we will need to recall some basic geometric properties of abelian varieties, and then introduce various measures of the geometric degree for a subvariety of $A$, that naturally appear in our bounds on Hilbert functions.

\subsection{Classical facts on abelian varieties} 

We gather here classical properties about the geometry of abelian varieties, morphisms and stabilizers, which will be used frequently in the sequel. Let us start with a precious information concerning the translations in $A$.
\begin{lemma}[Lange-Ruppert, \cite{Lange-Ruppert85}]\label{lem:Lange}
The translations in $A$ can be defined by homogeneous polynomials in $K[X_0,\ldots,X_n]$ with degree at most~$2$.
\end{lemma}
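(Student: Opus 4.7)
The plan is to combine the theorem of the square with the projective normality of the embedding $\Phi : A \hookrightarrow \PP^n$ defined by $\mathcal{L}$. Given a point $a \in A$, the composition $\Phi \circ \tau_a$ is determined projectively by the pulled-back sections $\tau_a^* s_0, \dots, \tau_a^* s_n \in H^0(A, \tau_a^*\mathcal{L})$, where $s_0, \dots, s_n$ is a basis of $H^0(A, \mathcal{L})$ defining $\Phi$. These sections are not a priori represented by polynomials in the ambient coordinates because $\tau_a^*\mathcal{L}$ is not isomorphic to $\mathcal{L}$; the idea is to twist them into $\mathcal{L}^{\otimes 2}$, whose global sections are precisely the restrictions of quadratic forms on $\PP^n$.

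The crucial algebraic input is the theorem of the square, which yields
\[
\tau_a^*\mathcal{L} \otimes \tau_{-a}^*\mathcal{L} \;\cong\; \mathcal{L}^{\otimes 2}.
\]
Multiplying each $\tau_a^* s_i$ by a nonzero auxiliary section $t \in H^0(A, \tau_{-a}^*\mathcal{L})$ produces a global section $t\cdot\tau_a^*s_i \in H^0(A, \mathcal{L}^{\otimes 2})$. Because $\Phi$ is a normal (hence projectively normal) embedding, the restriction map $H^0(\PP^n, \mathcal{O}(2)) \to H^0(A, \mathcal{L}^{\otimes 2})$ is surjective, so each such product is the restriction of a homogeneous polynomial $P_{t,i}$ of degree $2$ in $K[X_0,\dots,X_n]$. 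On the open set $\{t \neq 0\} \subset A$, the translation $\tau_a$ is then given in projective coordinates by $(P_{t,0} : \cdots : P_{t,n})$.

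To conclude, I would cover $A$ by finitely many such open sets: choosing sections $t_1, \dots, t_m$ of the very ample line bundle $\tau_{-a}^*\mathcal{L}$ with empty common zero locus yields a system of $m(n+1)$ degree-$2$ polynomials without common zero on $A$, which globally defines the morphism $\tau_a$. The rationality over $K$ follows from Galois equivariance when $a \in A(K)$, otherwise the coefficients naturally lie in the field of definition of $a$. The main conceptual step is identifying the correct auxiliary line bundle, namely $\tau_{-a}^*\mathcal{L}$, which is essentially forced by the theorem of the square; the only mild technical obstacle is verifying that the normality assumption on $\Phi$ is sufficient to give surjectivity onto $H^0(A,\mathcal{L}^{\otimes 2})$ in degree exactly $2$, so that the twisted sections genuinely lift to \emph{quadratic} forms rather than to forms of higher degree.
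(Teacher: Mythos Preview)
The paper does not give its own proof of this lemma: it is stated as a result of Lange and Ruppert with a citation to \cite{Lange-Ruppert85}, followed only by the remark that projective normality is needed for the bound to hold. So there is no argument in the paper to compare yours against.

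That said, your sketch is essentially the standard proof (and the one in the cited reference): the theorem of the square identifies $\tau_a^*\mathcal{L}\otimes\tau_{-a}^*\mathcal{L}$ with $\mathcal{L}^{\otimes 2}$, multiplication by auxiliary sections of $\tau_{-a}^*\mathcal{L}$ pushes the translated coordinate sections into $H^0(A,\mathcal{L}^{\otimes 2})$, and projective normality lifts these to quadratic forms on $\PP^n$. Your concern about whether normality guarantees surjectivity already in degree $2$ is unfounded: projective normality means precisely that $H^0(\PP^n,\mathcal{O}(d))\to H^0(A,\mathcal{L}^{\otimes d})$ is surjective for every $d\geq 1$, so degree $2$ is covered. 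The one point worth tightening is the claim about the field of definition: to get polynomials with coefficients in $K$ uniformly in $a$ (i.e.\ for the addition morphism $A\times A\to A$ rather than a single translation), one should run the same argument on $A\times A$ with the line bundle $\mathcal{L}\boxtimes\mathcal{L}$ and the universal translation, which is how Lange and Ruppert phrase it.
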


\noindent
Notice that a projectively normal embedding in~$\PP^n$ is needed here. Without this assumption, the degree of the homogeneous polynomials can not be so explicitly bounded. 

The degree of a subvariety of $A$ is invariant under translation. We now describe how it behaves under some isogenies. Let $V$ be an irreducible subvariety of $A$, and for a non-zero integer $k$, denote by $[k]$ the isogeny: $A \rightarrow A$. By Lemme~6 of \cite{Hindry88},  we have 

\begin{equation}\label{eq:degree}
\deg([k]^{-1}V)=k^{2\codim(V)}\deg(V).
\end{equation}

\medskip

Suppose now that $V$ is non-torsion. We will exploit this assumption by looking at the {\it stabilizer} of $V$. 

\begin{definition}
The stabilizer of $V$ is the algebraic subgroup of $A$ defined by
$$\Stab(V):=\lbrace P \in A(\bar{K}) \;\vert\; P+V=V \rbrace.$$
\end{definition}

\noindent
Because we assume that $V$ is non-torsion, it follows from B\'ezout's theorem (\cite{Hindry88}, Lemme 8) that $$\mathrm{dim} (\Stab(V)) < \mathrm{dim}(V),$$ and $$\deg (\Stab(V)) \leq 2^g \deg (V)^{\dim(V)+1}.$$ By Poincar\'e's complete reducibility theorem, the abelian variety $A$ is isogenous to a product $$B \times \Stab(V)^0,$$ where $\Stab(V)^0$ is the connected component of $\Stab(V)$ which contains~$0$, and $B$ is an abelian subvariety of $A$ (\cite{Hindry88}, Lemme 9). After composing with an isogeny whose kernel is $\Stab(V)/\Stab(V)^0$, we find a surjective homomorphism
\begin{equation}\label{eq:homomorphism-stabilizer}
\varphi_V \colon A \rightarrow B,
\end{equation}
with $\ker \varphi_V = \Stab(V)$. Taking $K$ large enough so that all the simple factors of $A$ are defined over $K$, we may assume that $\varphi_V$ is defined over $K$.

\subsection{Degrees of definition and Hilbert functions}

A key point in our approach is to use some (classical) refined variants of the projective degree.
If $V$ is a subvariety of $A$, we define its degree to be the sum of the degrees of its irreducible components. We have already introduced $\delta(V)$ as the minimal degree of hypersurfaces of $A$ with intersection $V$. The next definition only retains the projective nature of $V$. 

\begin{definition} The degree of incomplete (resp. complete) definition of~$V$, denoted by $\delta_0(V)$ (resp. $\delta_1(V)$), is the minimal $d$ such that the irreducible components of $V$ are irreducible components of an intersection (resp. $V$ is an intersection) of hypersurfaces in $\PP^n$ which all have degree at most~$d$.
\end{definition}

\noindent
For a family of subvarieties $V_1,\ldots,V_t$ of $A$, we easily get the following inequality, see for instance (\cite{Martinez17}, Lemma~2.6):
\begin{equation}\label{eq:delta}
\delta_1 \Big(\bigcup_{i=1}^t V_i\Big)\leq \sum_{i=1}^t\delta_1(V_i).
\end{equation}

\noindent
We have the following inequalities between our different degrees.
\begin{lemma}\label{delta-ineq}
If $V\subset A$ is an equidimensional variety, we have
\begin{center}
\begin{enumerate}[label=(\roman*)]
\item $\delta_0(V)\leq \delta_1(V) \leq \deg(V)$,
\smallskip
\item $\delta_1(V) \leq \delta(V) \leq \deg(A) \delta_1(V)$.
\end{enumerate}
\end{center}
\end{lemma}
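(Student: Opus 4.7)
The plan is to unpack each definition and transfer between the $\mathbb{P}^n$ and $A$ viewpoints, using the convention that the degree of a hypersurface $H = A \cap \{f=0\}$ of $A$ equals $\deg(f)\cdot\deg(A)$, i.e.\ its $\mathcal{L}$-degree as a subvariety of $A$. Under this convention the smallest possible degree of a hypersurface of $A$ is $\deg(A)$ (realised by hyperplane sections), so every value $\delta(V)$ is a positive multiple of~$\deg(A)$; in particular $\delta(V) \geq \deg(A)$. This last observation is what will make the otherwise asymmetric inequality in (ii) come out correctly.

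For part~(i), the inequality $\delta_0(V)\leq\delta_1(V)$ is essentially tautological, since if $V$ itself is the intersection of hypersurfaces of degree at most~$d$ in~$\mathbb{P}^n$, then its irreducible components appear in particular among the irreducible components of that intersection. The bound $\delta_1(V)\leq\deg(V)$ is the classical Mumford/Heintz estimate that an equidimensional closed subvariety of $\mathbb{P}^n$ of degree $d$ is cut out set-theoretically by hypersurfaces of degree at most~$d$; I would simply cite this, or recover it from the Chardin--Philippon bounds on the Hilbert function that the paper is about to introduce.

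For part~(ii), the inequality $\delta(V)\leq\deg(A)\,\delta_1(V)$ follows by restricting defining polynomials from $\mathbb{P}^n$ to~$A$: writing $V = \bigcap_i \{f_i = 0\}$ in~$\mathbb{P}^n$ with $\deg(f_i)\leq \delta_1(V)$ and discarding the $f_i$ that vanish identically on $A$ (which are redundant once we intersect with~$A$), the remaining $A\cap\{f_i=0\}$ are hypersurfaces of~$A$ of degree $\deg(f_i)\cdot\deg(A)\leq\deg(A)\,\delta_1(V)$ whose intersection is~$V$. Conversely, for $\delta_1(V)\leq\delta(V)$, start from $V=\bigcap_i(A\cap\{f_i=0\})$ with $\deg(f_i)\cdot\deg(A)\leq\delta(V)$ and adjoin polynomials $g_1,\ldots,g_s$ of degree $\leq\delta_1(A)\leq\deg(A)$ cutting out $A$ in~$\mathbb{P}^n$ (using part~(i) applied to~$A$). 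The combined family cuts out $V$ in~$\mathbb{P}^n$ with degrees at most $\max\bigl(\delta(V)/\deg(A),\,\deg(A)\bigr)$; since $\delta(V)\geq\deg(A)$, both quantities in the maximum are bounded by~$\delta(V)$. The only real obstacle is calibrating the degree convention for hypersurfaces of $A$; once that is pinned down, both parts of~(ii) reduce to this elementary passage between equations on $A$ and equations on~$\mathbb{P}^n$.
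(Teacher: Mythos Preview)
Your argument for $\delta_1(V)\leq\delta(V)$ rests on a misreading of the definition. A ``hypersurface of $A$'' in Definition~\ref{def:delta} is an arbitrary codimension-$1$ closed subvariety $Z\subset A$, with degree its projective degree under the fixed embedding; it is \emph{not} assumed to be of the form $A\cap\{f=0\}$, and there is no reason for $\deg(Z)$ to be a multiple of $\deg(A)$. For instance, a codimension-$1$ abelian subvariety $B\subset A$ is such a hypersurface and can easily satisfy $\deg(B)<\deg(A)$; taking $V=B$ then gives $\delta(V)=\deg(B)<\deg(A)$, so your claimed inequality $\delta(V)\geq\deg(A)$ fails, and with it the final step $\max\bigl(\delta(V)/\deg(A),\deg(A)\bigr)\leq\delta(V)$. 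More fundamentally, you never justify writing the $Z_i$ realising $\delta(V)$ as $A\cap\{f_i=0\}$ with $\deg(f_i)\cdot\deg(A)\leq\delta(V)$; even when $Z_i$ \emph{is} a hyperplane section, B\'ezout only gives $\deg(A\cap\{f=0\})\leq\deg(f)\deg(A)$, not equality.

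The paper's route for $\delta_1(V)\leq\delta(V)$ avoids this entirely: it applies part~(i) to each hypersurface $Z_i$ of $A$ realising $\delta(V)$. Since $Z_i$ is equidimensional, $\delta_1(Z_i)\leq\deg(Z_i)\leq\delta(V)$, so $Z_i$ is cut out in $\PP^n$ by hypersurfaces of degree at most $\delta(V)$; taking all these equations together cuts out $V=\bigcap_i Z_i$ in $\PP^n$, whence $\delta_1(V)\leq\delta(V)$. Your proofs of the other three inequalities are essentially correct and match the paper (for $\delta_1(V)\leq\deg(V)$ the paper uses generic linear projections to $\PP^{\dim(V)+1}$ rather than citing a named bound, but the content is the same; for $\delta(V)\leq\deg(A)\,\delta_1(V)$ your argument is the paper's, once you replace the unjustified equality $\deg(A\cap\{f=0\})=\deg(f)\deg(A)$ by the B\'ezout inequality $\leq$, which is all that is needed).
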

\begin{proof}
The first inequality is straightforward. The image of $V$ by a linear map $\PP^n \rightarrow \PP^{\dim(V)+1}$ has degree at most $\deg(V)$, and the variety $V$ is the intersection of hypersurfaces of $\PP^n$ obtained by pull-backs of such linear maps. This shows that $$\delta_1(V) \leq \deg(V).$$

Now, if $Z$ is a hypersurface of $A$, we have $\delta_1(Z) \leq \deg(Z)$. We choose a set of hypersurfaces $Z_i$ of $A$ of degree at most $\delta(V)$ and such that $V= \bigcap_i Z_i$. We get:
$$\delta_1(V) \leq \max_i \delta_1(Z_i) \leq \max_i \deg(Z_i) \leq \delta(V).$$
Assume finally that $V= \bigcap_i Z_i$ where the $Z_i$'s are hypersurfaces of $\PP^n$. After possibly removing some of the $Z_i$'s, we have $V= \bigcap_i Z_i \cap A$, where $Z_i \cap A$ is a hypersurface of $A$. The last inequality is then a direct consequence of B\'ezout's theorem. 
\end{proof}




The degrees of complete and incomplete definition do not necessarily behave as the usual degree with respect to translations in~$A$. However, we have the following useful comparison.
\begin{lemma}\label{lem:deltalemma}
Let $V$ be a subvariety of $A$.
If $P\in A$, then
\[
\delta_1(P+V)\leq 2\,\delta_1(V)
\quad\mbox{and}\quad
\delta_0(P+V)\leq 2\,\delta_0(V).
\]
\end{lemma}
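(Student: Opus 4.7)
The plan is to pull back defining equations of $V$ by the translation $\tau_{-P}$, using Lemma~\ref{lem:Lange} to ensure that this pull-back at most doubles the degree.

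Set $d=\delta_1(V)$ and choose homogeneous polynomials $F_1,\ldots,F_r\in K[X_0,\ldots,X_n]$ of degree at most $d$ with $V=\bigcap_i\{F_i=0\}$ in $\PP^n$. By Lemma~\ref{lem:Lange}, the translation $\tau_{-P}$ is represented on $A$ by finitely many tuples $\sigma^{(k)}=(\sigma_0^{(k)},\ldots,\sigma_n^{(k)})$ of homogeneous quadrics in $K[X_0,\ldots,X_n]$, whose common base locus $B:=\bigcap_k B_k$ meets $A$ trivially (so that together they cover $A$ as a regular morphism). Pulling back gives $G_i^{(k)}:=F_i(\sigma_0^{(k)},\ldots,\sigma_n^{(k)})$, homogeneous of degree at most $2d$, and satisfying $G_i^{(k)}(x)=F_i(x-P)$ for every $x\in A\setminus B_k$. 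Hence $\bigcap_{i,k}\{G_i^{(k)}=0\}\cap A=P+V$.

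For the $\delta_0$ inequality this already suffices: the irreducible components of $P+V$ all lie in $A$, while any extra components of the $\PP^n$-intersection lie in $B\subset\PP^n\setminus A$, so they cannot absorb the components of $P+V$, which therefore appear as components of the full $\PP^n$-intersection. Rerunning the argument with $d=\delta_0(V)$ yields $\delta_0(P+V)\leq 2\,\delta_0(V)$. For the $\delta_1$ inequality we additionally need the $\PP^n$-intersection to be exactly $P+V$; this is achieved by adjoining to the $G_i^{(k)}$'s a bounded-degree family of hypersurfaces cutting $A$ out of $\PP^n$ (available thanks to the projective normality of the embedding, and of degree at most $2d$ except in the small-$d$ edge cases, where $V$ is empty or a point and the inequality is immediate).

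The main obstacle is passing from the bare statement of Lemma~\ref{lem:Lange} to its multi-representation form: we need several quadric tuples for $\tau_{-P}$ whose common base locus avoids $A$, so that the pulled-back polynomials $G_i^{(k)}$ jointly cut out $P+V$ on~$A$ with no spurious contribution. Once this is secured, the degree bookkeeping (composition of a degree-$d$ polynomial with degree-$2$ polynomials) and the elimination of the residual locus in $\PP^n\setminus A$ for~$\delta_1$ are routine.
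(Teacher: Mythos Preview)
Your approach is the paper's: pull back the defining equations of $V$ along $\tau_{-P}$ using the quadric representation of translations from Lemma~\ref{lem:Lange}, which doubles degrees. The paper's proof is a single sentence to this effect; your treatment of the multi-chart representation of $\tau_{-P}$ is already more careful than what the paper writes.

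There is one small gap. In the $\delta_0$ argument, the assertion that ``any extra components of the $\PP^n$-intersection lie in $B\subset\PP^n\setminus A$'' is not justified. Off $A$ the rational maps $\sigma^{(k)}$ are unconstrained, so a point $x\in\PP^n\setminus(A\cup B)$ can perfectly well satisfy $\sigma^{(k)}(x)\in V$ for every $k$ with $x\notin B_k$; hence $\bigcap_{i,k}\{G_i^{(k)}=0\}$ may have irreducible components that meet $A$ without being contained in it, and such a component could in principle strictly contain a component of $P+V$. The fix is exactly the device you already invoke for $\delta_1$: adjoin a family of low-degree hypersurfaces cutting out $A$ in $\PP^n$, so that the full intersection is forced inside $A$, where it equals (or has as components) the components of $P+V$. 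Note also that the reason these auxiliary hypersurfaces can be taken of degree at most $2$ is not ``projective normality'' per se, but the classical fact that an abelian variety embedded by a sufficiently high tensor power of an ample line bundle is defined by quadrics; the paper's proof glosses over this point entirely.
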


\begin{proof}
By Lemma~\ref{lem:Lange}, if we have a set of complete (resp. incomplete) equations of degree $d >0$ for $V$, we get a set of complete (resp. incomplete) equations of degree $2d$ for $V+P$. This proves the announced inequalities.
\end{proof}

\medskip

We now introduce the Hilbert function that can be attached to any projective variety. The incomplete degree of definition naturally arises in a classical lower bound on this function, which explains why we needed to introduce and study this degree.

To a subvariety $V \subset A \subset \PP^n$, there corresponds the homogeneous ideal $I$ of polynomials of $\bar{K}[X_0, \ldots, X_n]$ which vanish on $V$.
This defines a graded $\bar{K}[X_0, \ldots, X_n]$-module $\bar{K}[X_0, \ldots, X_n]/I$. For $\nu$ a positive integer, we let
\[
H(V;\nu):=\dim(\bar{K}[X_0, \ldots, X_n]/I)_{\nu}
\]
the Hilbert function of $V$ at $\nu$. We start with a classical upper bound on the Hilbert function.
\begin{theorem}[Chardin, \cite{Chardin89}]\label{thm:HilbertUpperBound}
If $V$ is an equidimensional variety of dimension~$d$ and $\nu\in\mathbb{N}$, then
\[
H(V;\nu)\leq 
\begin{pmatrix}
\nu + d\\ d
\end{pmatrix}
\deg(V).
\]
\end{theorem}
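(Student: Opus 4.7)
The standard approach is induction on the dimension $d$ of $V$, reducing to lower dimension via a generic hyperplane section. The base case $d=0$ is immediate: a zero-dimensional equidimensional variety $V$ has degree equal to its total multiplicity, and its Hilbert function $H(V;\nu)$ is bounded by this degree for every $\nu\geq 0$, matching the claim since $\binom{\nu}{0}=1$.

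For the inductive step, let $V\subset\PP^n$ be equidimensional of dimension $d\geq 1$ with homogeneous ideal $I\subset R:=\bar{K}[X_0,\ldots,X_n]$, and assume the bound holds in dimension $d-1$. Since $V$ is reduced and equidimensional, the associated primes of $I$ are exactly the minimal primes, which correspond to the finitely many irreducible components of $V$. As $\bar{K}$ is infinite, I can choose a linear form $L\in R_1$ that vanishes on none of them; such an $L$ is then a non-zero-divisor modulo $I$, and the hyperplane $H=\{L=0\}$ meets $V$ properly, so $V\cap H$ is equidimensional of dimension $d-1$ with $\deg(V\cap H)\leq \deg(V)$ by B\'ezout. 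The short exact sequence of graded $R$-modules
\[
0 \longrightarrow (R/I)(-1) \xrightarrow{\;\cdot L\;} R/I \longrightarrow R/(I+(L)) \longrightarrow 0
\]
then yields, in each degree $\nu$, the recurrence $H(V;\nu)=H(V;\nu-1)+H(V\cap H;\nu)$.

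Telescoping this recurrence and using $H(V;0)=1\leq \deg(V)$ together with the inductive hypothesis applied to $V\cap H$, I obtain
\[
H(V;\nu)\;\leq\;\deg(V)\sum_{k=0}^{\nu}\binom{k+d-1}{d-1}\;=\;\deg(V)\binom{\nu+d}{d}
\]
by the hockey stick identity. The main technical point is securing a linear form $L$ that acts as a non-zero-divisor on $R/I$, so that the middle map in the exact sequence is injective and the recurrence becomes an equality rather than a mere inequality going the wrong way; this is precisely where the equidimensionality (together with reducedness) of $V$ is essential, as it guarantees that $I$ has no embedded associated primes and that a generic hyperplane can simultaneously avoid every minimal prime. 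Everything else—B\'ezout's bound for the hyperplane section, the telescoping, and the binomial identity—is routine.
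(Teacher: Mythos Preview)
The paper does not give a proof of this statement: it is quoted as a result of Chardin with a reference to~\cite{Chardin89}, and then used as a black box in the interpolation arguments. So there is no ``paper's own proof'' to compare against. Your argument is the standard hyperplane-section induction that underlies Chardin's bound, and it is essentially correct.

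One technical point deserves a word. In your recurrence you write $H(V\cap H;\nu)$ for the third term of the exact sequence, but what the sequence actually produces is the Hilbert function of $R/(I+(L))$, i.e.\ of the \emph{scheme-theoretic} intersection. To feed this back into your induction hypothesis, which is phrased for (reduced) equidimensional varieties, you need $I+(L)$ to be radical. Over $\bar{K}$ in characteristic~$0$ this is guaranteed for a generic linear form by Bertini's theorem, so the gap closes immediately once you say so; alternatively, one can run the whole induction for equidimensional closed subschemes (with degree counted with multiplicities), in which case the issue never arises and $\deg(V\cap H)=\deg(V)$ on the nose. Either route is standard and your overall strategy is sound.
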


\noindent
On the other hand, a refined version of Chardin and Phillipon's theorem on Castelnuovo's regularity yields a lower bound for the Hilbert function when $\nu$ is large enough in a precise sense. The following is Th\'eor\`eme 6.1 of \cite{Amoroso-Viada12}   (see  \cite{ChardinPhilippon99}, Corollaire~3 for the original statement).
\begin{theorem}[Chardin-Philippon] \label{thm:HilbertLowerBound}
Let $V:= \bigcup_{1 \leq j \leq s} V_j$ an union of equidimensional varieties of dimension~$d$ and $m:= s-1+ (n-d)\sum_{j=1}^s(\delta_0(V_j)-1)$. For any integer $\nu>m$, we have
\[
H(V;\nu)\geq
\begin{pmatrix}
\nu+d-m\\ d
\end{pmatrix}
\deg(V).
\]
\end{theorem}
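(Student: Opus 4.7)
The plan is to bound the Castelnuovo--Mumford regularity of $V$ by $m$ and then deduce the lower bound on $H(V;\nu)$ via generic hyperplane sections. For a single equidimensional component $V_j$, I would first extract from its incomplete defining equations a complete intersection $Y_j\supseteq V_j$ of codimension $n-d$ cut out by hypersurfaces of degree at most $\delta_0(V_j)$. Such a complete intersection is $(n-d)(\delta_0(V_j)-1)$-regular (this is the classical regularity bound for a complete intersection of hypersurfaces of degrees $d_1,\dots,d_{n-d}$, namely $\sum(d_i-1)$), and since $V_j$ appears as an equidimensional irreducible component of $Y_j$, the same bound controls its own regularity up to lower-order terms.

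For the whole union $V=\bigcup_j V_j$ I would proceed by induction on $s$ via the Mayer--Vietoris short exact sequence
\[
0 \to \cO_V \to \cO_{V_1} \oplus \cO_{V_2\cup\cdots\cup V_s} \to \cO_{V_1\cap(V_2\cup\cdots\cup V_s)} \to 0.
\]
The scheme-theoretic intersection on the right has dimension strictly smaller than $d$, so its contribution is of lower order and can be absorbed. Each Mayer--Vietoris step in the induction costs a unit in the regularity estimate, which is precisely the $s-1$ correction appearing in~$m$, while the sum $(n-d)\sum_j(\delta_0(V_j)-1)$ accumulates the individual regularities obtained in the previous step. Combined with \eqref{eq:delta}-type control on the degrees of incomplete definition of nested intersections, this yields $\mathrm{reg}(V)\leq m$.

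Once $V$ is shown to be $m$-regular, I would cut by $d$ generic hyperplanes $H_1,\dots,H_d$ and set $W_i:=V\cap H_1\cap\cdots\cap H_i$. For generic choices each $W_i$ is equidimensional of dimension $d-i$ and multiplication by $H_{i+1}$ is injective on the homogeneous coordinate ring of $W_i$, yielding the difference identity
\[
H(W_i;\nu) - H(W_i;\nu-1) = H(W_{i+1};\nu)
\]
valid for every $\nu>m$. The zero-dimensional $W_d$ has $H(W_d;\nu)=\deg(V)$ past its regularity, and iterating the difference identity $d$ times gives that $H(V;\nu)$ is at least $\deg(V)$ times the number of weakly decreasing sequences $\nu\geq k_1\geq\cdots\geq k_d\geq m$, which is exactly $\binom{\nu+d-m}{d}$.

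The delicate step will be the regularity bound for the union: although each $V_j$ is controlled individually through $\delta_0(V_j)$, the intermediate intersections $V_1\cap(V_2\cup\cdots\cup V_s)$ may acquire components of mixed dimension and their degrees of incomplete definition can a priori blow up. Carrying out the induction so that the losses of regularity stay additive in $s$ and linear in the $\delta_0(V_j)$ is what forces the precise shape of $m$ in the theorem, and is the main point where the Chardin--Philippon regularity machinery (as refined in Amoroso--Viada) is genuinely needed.
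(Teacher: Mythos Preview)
The paper does not prove this theorem at all: it is quoted from the literature, with the attribution ``The following is Th\'eor\`eme~6.1 of \cite{Amoroso-Viada12} (see \cite{ChardinPhilippon99}, Corollaire~3 for the original statement).'' So there is nothing in the paper to compare your proposal against; the authors simply invoke the result as a black box.

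That said, your outline does follow the broad architecture of the Chardin--Philippon argument (regularity bound, then generic hyperplane sections and summation of the difference identity). One point in your sketch is genuinely imprecise, though: the step where you say that since $V_j$ is an equidimensional component of the complete intersection $Y_j$, ``the same bound controls its own regularity up to lower-order terms.'' In general an irreducible component of a scheme can have \emph{larger} regularity than the ambient scheme, so this is not automatic. The actual Chardin--Philippon proof does not pass through $\mathrm{reg}(V_j)$ directly in this way; it uses liaison (linkage) inside the complete intersection and a Bertini-type reduction to control the Hilbert function of the residual, which is what ultimately produces the term $(n-d)(\delta_0(V_j)-1)$. Your Mayer--Vietoris bookkeeping for the union and the $s-1$ correction is in the right spirit, and you correctly flag that this is where the technical work lies; but the single-component step needs the liaison input rather than a naive ``component inherits regularity'' claim.
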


\noindent
{\it Remark.} Combining these two bounds provides a powerful interpolation tool, yielding for a well chosen pair of varieties $(V,V')$ a hypersurface of controlled degree which contains $V$ and avoids $V'$.

\section{Galois properties of torsion points and geometric consequences}

In this section, we use a deep theorem of Serre on Galois representations to locate the torsion subset of a subvariety of~$A$. Our strategy is primarily based on the classical approach to the Manin-Mumford conjecture initiated by Lang in~\cite{Lang65}.

\subsection{Homotheties in the image of Galois}

For every prime number~$\ell$, let $T_{\ell}(A)$ be the $\ell$-adic Tate module of~$A$. There is a representation
\[
\rho_{\ell}: G_K:=\mathrm{Gal}(\bar{K}/K) \longrightarrow \mathrm{GL}_{\mathbb{Z}_{\ell}}\big(T_{\ell}(A)\big),
\]
induced by the action of $G_K$ on the torsion subgroup of $A$. A long-standing conjecture of Lang states that the image of the absolute Galois group $G_K$ in the adelic representation
\[
\rho:= \prod_{\ell} \rho_{\ell}: G_K \longrightarrow \prod_{\ell}  \mathrm{GL}_{\mathbb{Z}_{\ell}}\big(T_{\ell}(A)\big)
\]
contains an open subgroup of the group of homotheties. In \cite{Bogomolov80}, Bogomolov proved that for a fixed prime number~$\ell$, the group $\rho_{\ell}(G_K)$ contains an open subgroup of the group of homotheties. Serre later showed that $\rho(G_K)$ contains a fixed power of every admissible homothety. We will use the following version of his theorem.
\begin{theorem}[Serre]
\label{homotheties}
There is an integer $c(A) \geq 1$ such that, for any two coprime positive integers $l$ and~$k$, there exists an automorphism $\sigma \in G_K$ satisfying  $$\sigma |_{ A[l]}= \big[k^{c(A)}\big].$$
\end{theorem}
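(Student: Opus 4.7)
The plan is to combine Bogomolov's local theorem at each $\ell$ with a uniformity statement, and then patch the local data into a single Galois element. I would proceed in three steps: a local step, producing an element of $\rho_\ell(G_K)$ acting as a power of~$k$ on $A[\ell^{v_\ell(l)}]$; a uniformity step, ensuring the exponent is independent of~$\ell$; and a patching step, assembling the local elements into a single $\sigma\in G_K$.

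For the local step, Bogomolov provides, for each prime~$\ell$, an integer $m_\ell$ with $(1+\ell^{m_\ell}\ZZ_\ell)\cdot\mathrm{Id}\subset\rho_\ell(G_K)$. When $\ell\nmid k$, the unit $k\in\ZZ_\ell^\times$ has some power of order at most $\varphi(\ell^{m_\ell+v_\ell(l)})$ that lies in this open neighbourhood of the identity, so that $k^{c_\ell}\cdot\mathrm{Id}$ is attained by a Galois element, which then acts as $[k^{c_\ell}]$ on $A[\ell^{v_\ell(l)}]$.

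The main difficulty is the uniformity: a priori $c_\ell$ depends on $m_\ell$ and $v_\ell(l)$, which would make it explode with~$l$. To bypass this, I would invoke Serre's finer input: combining Faltings' semisimplicity theorem with the integrality and Weil-boundedness of the characteristic polynomials of Frobenius at primes of good reduction, one shows that the index of $\rho_\ell(G_K)\cap\ZZ_\ell^\times\cdot\mathrm{Id}$ inside $\ZZ_\ell^\times\cdot\mathrm{Id}$ can be bounded uniformly in~$\ell$ (in fact becoming trivial for all but finitely many~$\ell$). The $m_\ell$ are then uniformly bounded, so $k$ itself lies in $\rho_\ell(G_K)$ for almost all~$\ell$, and the finitely many bad primes are absorbed by taking the least common multiple of the corresponding $\varphi(\ell^{m_\ell})$.

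For the patching step, write $l=\prod_i \ell_i^{a_i}$. The uniformity step yields in each $\mathrm{GL}(A[\ell_i^{a_i}])$ a Galois element acting as $[k^{c(A)}]$. To combine these into a single $\sigma$ with $\sigma|_{A[l]}=[k^{c(A)}]$, I would use that the image of $G_K$ in $\prod_i \mathrm{GL}(A[\ell_i^{a_i}])$ coincides with the direct product of its projections up to a quotient of bounded order, a consequence of Serre's adelic open image theorem; after possibly enlarging $c(A)$ once more to absorb this discrepancy, a Chinese-remainder reconstruction produces the desired $\sigma$. The crux of the argument is clearly the uniformity step: without it, $c(A)$ would depend on~$l$, rendering the statement empty.
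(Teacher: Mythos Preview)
The paper does not actually prove this theorem: its ``proof'' is a bare citation to Wintenberger (\emph{Th\'eor\`eme}~3), to Serre's original 1986 statement, and to Hindry (\emph{Lemme}~12). You have instead attempted to sketch a genuine proof, which is a far more ambitious undertaking than what the paper does. Your local step is fine, and your uniformity step at least names the right ingredients, though the phrase ``Faltings' semisimplicity theorem with the integrality and Weil-boundedness of the characteristic polynomials of Frobenius'' conceals a great deal of work on Frobenius tori and the connectedness of certain $\ell$-adic monodromy groups; this is precisely the hard content of Serre's 1985--86 letters to Ribet and of Wintenberger's article, and it is not something one can summarise in a sentence.

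The patching step, however, is circular. You invoke ``Serre's adelic open image theorem'' to conclude that the image of $G_K$ in $\prod_i \mathrm{GL}(A[\ell_i^{a_i}])$ agrees with the product of its projections up to bounded index. For a general abelian variety no such theorem exists: an adelic open image statement of that strength would amount to (a form of) the Mumford--Tate conjecture, which is open. What Serre actually proved for arbitrary $A$ \emph{is} the homothety statement itself, namely that $\rho(G_K)\cap\hat{\ZZ}^\times$ is open in $\hat{\ZZ}^\times$; so you are appealing to the theorem, or to something strictly stronger, in order to establish it. The genuine adelic patching in Serre's argument does not proceed by Chinese remaindering local Galois elements; it goes through the Weil pairing (tying $\det\rho_\ell$ to the cyclotomic character, which \emph{is} naturally adelic) together with group-theoretic arguments on compact subgroups of $\prod_\ell \mathrm{GL}_{2g}(\ZZ_\ell)$. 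Without that input, having the right homothety in each $\rho_{\ell_i}(G_K)$ separately does not produce a single $\sigma\in G_K$ realising all of them at once.
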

\begin{proof}
This is  \cite{Wintenberger02}, Th\'eor\`eme~3. See also \cite{Serre86} p.136, Th\'eor\`eme~2 for the original statement, or \cite{Hindry88}, Lemme~12.
\end{proof}

\noindent
{\it Remark.} The problem of finding an explicit $c(A)$ in terms of $A$ - and of the field $K$ over which $A$ is defined - is still open and discussed in  \cite{Wintenberger02}, Section~2. In order to simplify notations, and since $A$ is fixed, we will write $c$ for $c(A)$ in the sequel. 

\medskip

Let $V$ be an irreducible non-torsion subvariety of $A$. Serre's theorem is our main tool to find a strict subvariety of $V$ that contains the torsion subset $V_{\mathrm{tors}}$ of~$V$. We let $X:= \varphi_V(V)$, where $\varphi_V$ was defined in (\ref{eq:homomorphism-stabilizer}). 

We distinguish several cases according to wether and how $K_X \subset K_{\mathrm{tors}}$, and first tackle the simpler case where $K_X$ is not contained in $K_{\mathrm{tors}}$.
\begin{lemma}\label{lemma:V' no torsion}
If $K_X \not\subset K_{\mathrm{tors}}$, there is a conjugate $V'$ under the action of $G_K$ such that
$$V_{\mathrm{tors}} \subset V \cap V' \subsetneq V.$$
\end{lemma}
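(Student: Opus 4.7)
The plan is to reduce the statement to elementary Galois theory by exploiting the $G_K$-equivariance of $\varphi_V$. Since $\varphi_V\colon A\to B$ is defined over $K$, for every $\tau\in G_K$ we have $\varphi_V(\tau(V))=\tau(\varphi_V(V))=\tau(X)$. Consequently, $\tau(V)=V$ forces $\tau(X)=X$, or equivalently, any $\tau\in G_K$ that moves $X$ must also move $V$. This key observation is what lets the condition ``$K_X\not\subset K_{\mathrm{tors}}$'' on the quotient $X$ be transferred back to $V$.

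First I would extract from the hypothesis a suitable automorphism. Since $K_X\not\subset K_{\mathrm{tors}}$, the subgroup $\mathrm{Gal}(\bar{K}/K_XK_{\mathrm{tors}})$ is strictly contained in $\mathrm{Gal}(\bar{K}/K_{\mathrm{tors}})$, so there exists $\tau\in G_K$ fixing $K_{\mathrm{tors}}$ pointwise but with $\tau|_{K_X}\neq \mathrm{Id}$. Equivalently, $\tau$ acts as the identity on $A_{\mathrm{tors}}$ while $\tau(X)\neq X$.

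Next I would set $V':=\tau(V)$, a $G_K$-conjugate of $V$, and verify the two inclusions. For the containment $V_{\mathrm{tors}}\subset V\cap V'$: any $P\in V_{\mathrm{tors}}$ is fixed by $\tau$ (since $\tau|_{A_{\mathrm{tors}}}=\mathrm{Id}$), hence $P=\tau(P)\in \tau(V)=V'$, and so $P\in V\cap V'$. For the strict inclusion $V\cap V'\subsetneq V$: the key observation above, applied to $\tau(X)\neq X$, yields $V\neq V'$. Both $V$ and $V'$ are irreducible of the same dimension, so $V'\not\subset V$, which forces $\dim(V\cap V')<\dim V$ and in particular $V\cap V'\subsetneq V$.

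No serious obstacle arises, since all the arithmetic input is encapsulated in the choice of the quotient $\varphi_V$ made in~(\ref{eq:homomorphism-stabilizer}) and in plain infinite Galois theory; Serre's Theorem~\ref{homotheties} is not required here. Its role is presumably in the complementary case $K_X\subset K_{\mathrm{tors}}$, where an element of $G_K$ that simultaneously fixes all of $V_{\mathrm{tors}}$ yet moves $V$ cannot be produced by formal Galois theory alone and must be built from the homotheties $[k^{c(A)}]$ acting with prescribed behaviour on $A[l]$.
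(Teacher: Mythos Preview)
Your proof is correct and follows essentially the same approach as the paper: pick an element of $G_K$ that fixes all torsion but moves the variety, then observe that the torsion subset is trapped in the intersection with the conjugate. The only organizational difference is that the paper first transfers the hypothesis to $K_V$ (using $K_X\subset K_V$ since $\varphi_V$ is defined over $K$) and then chooses $\sigma$ nontrivial on $K_V$, whereas you keep the hypothesis at the level of $K_X$ and use the $G_K$-equivariance of $\varphi_V$ to deduce $\tau(V)\neq V$ from $\tau(X)\neq X$; your formulation has the mild advantage of making explicit from the outset that $\tau$ fixes all of $K_{\mathrm{tors}}$ (and hence every torsion point), a point the paper leaves implicit.
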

\begin{proof}
The isogeny $\varphi_V$ is defined over $K$, so the extension $K_V/K_V \cap K_{\mathrm{tors}}$ is strict and there is a nontrivial field isomorphism $\sigma: K_V \rightarrow \bar{K}$ such that $$\sigma | _{K_V \cap K_{\mathrm{tors}}}= \mathrm{Id}.$$ Since $\sigma$ acts trivially on $V_{\mathrm{tors}}$, the lemma follows. 
\end{proof}

\subsection{Scanning the field of definition}

For the remaining of this section, we now assume that $K_X \subset K_{\mathrm{tors}}$. In comparison with the toric case, some technical difficulties arise here because of the complexity of $K_{\mathrm{tors}}$, and because of the gap between Lang's conjecture and Serre's theorem. 

We start with a preliminary examination of~$K_X$, and we define two integers $M$ and $N$ that quantify more precisely the link between $K_X$ and~$K_{\mathrm{tors}}$. Since $K_X$ is a number field, there is an integer $l \geq 1$ such that
\[
K_X \subset K_l.
\]
We denote by $v_2$ the $2$-adic valuation of an integer. For the remaining of the section, fix $M \geq 1$ the smallest integer such that the latter inclusion holds, and 
\[
v_2(M) \geq c_2+2,
\]
where $c_2:=v_2(c)$. For $P \in A[M]$, we consider the subset of the integers
\[
\mathcal{N}(P)= \Big\{ \alpha > -v_2(M), \exists \, \sigma\in G_K, \sigma |_{ A[M]} = \big[ (1+2^{\alpha} M)^c \big]  \mathrm{and} \; \sigma | _{K_{X+P}} = \mathrm{Id} \Big\}.
\]
This set contains $\mathbb{N}$ because $[M] |_{A[M]}=0$ and $K_{X+P} \subset K_M$.
Let $\beta(P)$ be the biggest integer not in $\mathcal{N}(P)$, and
\[
\beta:= \min_{P \in A[M]} \{ \beta(P) \}.
\]
In particular, notice that both $\beta(R)$ and $\beta$ are always negative.
Finally, we set \[ N:=2^{\beta +1}M.\]

We will use repeatedly the following computation based on the properties of binomial coefficients. 
\begin{lemma}
\label{binomials}
Let $2 \leq \gamma \leq \delta$ be two integers. If $k$ is an integer with $v_2(k) \geq 2$: $$v_2 \left((^{\delta}_{\gamma}) k^{\gamma}\right) \geq v_2(k) +v_2(\delta)+1.$$
\end{lemma}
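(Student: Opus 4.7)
The plan is to reduce the claim to an elementary inequality in $2$-adic valuations, which should be straightforward.

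First, I would decompose the left-hand side additively:
\[
v_2\!\left(\tbinom{\delta}{\gamma}k^{\gamma}\right)=v_2\!\left(\tbinom{\delta}{\gamma}\right)+\gamma\,v_2(k).
\]
To handle the binomial coefficient, I would invoke the standard identity $\gamma\tbinom{\delta}{\gamma}=\delta\tbinom{\delta-1}{\gamma-1}$, which gives
\[
v_2\!\left(\tbinom{\delta}{\gamma}\right)=v_2(\delta)-v_2(\gamma)+v_2\!\left(\tbinom{\delta-1}{\gamma-1}\right)\geq v_2(\delta)-v_2(\gamma).
\]
Thus the desired inequality would follow once I show
\[
\gamma\,v_2(k)-v_2(\gamma)\geq v_2(k)+1,
\quad\text{i.e.}\quad
(\gamma-1)\,v_2(k)\geq v_2(\gamma)+1.
\]

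The second step is therefore to verify this elementary inequality using the two hypotheses $\gamma\geq 2$ and $v_2(k)\geq 2$. I would bound the left side from below by $2(\gamma-1)$ and then observe $v_2(\gamma)\leq \log_2(\gamma)\leq \gamma-1$, so that it suffices to check $2(\gamma-1)\geq \gamma$, i.e.\ $\gamma\geq 2$. This closes the argument.

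There is essentially no obstacle here: the only subtle point is picking the right factorization of $\tbinom{\delta}{\gamma}$ to extract $v_2(\delta)$ cleanly, and then absorbing the parasitic $v_2(\gamma)$ into one of the factors $v_2(k)$ (which is at least $2$, hence large enough to dominate $\log_2\gamma$). The hypothesis $v_2(k)\geq 2$ is used in a crucial way; without it, for $k$ odd or merely even, the corresponding inequality $(\gamma-1)v_2(k)\geq v_2(\gamma)+1$ could easily fail when $\gamma$ is a large power of~$2$.
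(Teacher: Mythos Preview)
Your proof is correct and follows essentially the same approach as the paper: both use the identity $\gamma\binom{\delta}{\gamma}=\delta\binom{\delta-1}{\gamma-1}$ to extract $v_2(\delta)-v_2(\gamma)$, then reduce to the elementary inequality $(\gamma-1)v_2(k)\geq v_2(\gamma)+1$ and verify it using $v_2(k)\geq 2$. The only cosmetic difference is in this last verification: the paper bounds $2\gamma-2-v_2(\gamma)\geq 1$ by splitting into the cases $\gamma=2$ and $\gamma\geq 3$, whereas you argue uniformly via $v_2(\gamma)\leq\log_2\gamma\leq\gamma-1$.
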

\begin{proof}
Recall that $$(^{\delta}_{\gamma})= \frac{\delta}{\gamma}(^{\delta-1}_{\gamma-1}).$$
We deduce
\begin{eqnarray*}
v_2\left((^{\delta}_{\gamma}) k^{\gamma}\right) & \geq & v_2(\delta) - v_2(\gamma) + \gamma v_2(k) \\ 
& \geq & v_2(k \delta) + (\gamma-1)v_2(k) -v_2(\gamma) \\
& \geq & v_2(k \delta) + 2\gamma - 2 - v_2(\gamma)
\end{eqnarray*}
If $\gamma \geq 3$, then $\gamma-v_2(\gamma)\geq 0$, and therefore $$2 \gamma -2 -v_2(\gamma) \geq \gamma -2 \geq 1.$$
This inequality is still true for $\gamma=2$, so the proof of the lemma is complete.
\end{proof}

\medskip

\noindent
We will need to compare the $2$-adic valuations of $M$ and $N$.
\begin{lemma}
\label{two-adic}
We have
$$v_2(N)+c_2 \leq v_2(M).$$ 
\end{lemma}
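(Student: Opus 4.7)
The plan is to reformulate: since $N=2^{\beta+1}M$, we have $v_2(N)=\beta+1+v_2(M)$, so the inequality is equivalent to $\beta\leq -c_2-1$. Because $\beta=\min_{P\in A[M]}\beta(P)$, it is enough (and in fact I will establish the stronger fact) that $\beta(P)\leq -c_2-1$ for \emph{every} $P\in A[M]$. This in turn means that every integer $\alpha\geq -c_2$ lies in $\mathcal{N}(P)$; note that our standing assumption $v_2(M)\geq c_2+2$ guarantees $\alpha>-v_2(M)$ for all such $\alpha$.

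Fix such a $P$ and $\alpha\geq -c_2$. I will show that $\sigma=\mathrm{Id}\in G_K$ is a valid witness for $\alpha\in\mathcal{N}(P)$: the identity trivially fixes $K_{X+P}$, so the only thing to check is $\mathrm{Id}|_{A[M]}=[(1+2^\alpha M)^c]|_{A[M]}$, i.e.\ the congruence $(1+2^\alpha M)^c\equiv 1\pmod M$. Writing $c=2^{c_2}c'$ with $c'$ odd and expanding,
\[
(1+2^\alpha M)^c - 1 = \sum_{k=1}^{c}\binom{c}{k}(2^\alpha M)^k,
\]
I verify that $M$ divides each summand. The $k=1$ term is $2^{c_2+\alpha}c'M$, a multiple of $M$ since $c_2+\alpha\geq 0$. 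For $k\geq 2$, the hypotheses give $v_2(2^\alpha M)=\alpha+v_2(M)\geq -c_2+(c_2+2)=2$, so Lemma~\ref{binomials} (with $\delta=c$ and $\gamma=k$) yields
\[
v_2\!\left(\binom{c}{k}(2^\alpha M)^k\right) \geq v_2(2^\alpha M)+c_2+1 \geq v_2(M)+1,
\]
which covers the 2-part; for the odd part, note that $(2^\alpha M)^k$ has odd part $M_{\mathrm{odd}}^k$, a multiple of $M_{\mathrm{odd}}$ for $k\geq 1$.

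Combining, each summand is a multiple of $M$, the congruence holds, and hence $\alpha\in\mathcal{N}(P)$. Therefore $\beta(P)\leq -c_2-1$ for every $P$, so $\beta\leq -c_2-1$ and the lemma follows. The argument is essentially a careful 2-adic accounting, and the only delicate point is the boundary case $\alpha=-c_2$, where the assumption $v_2(M)\geq c_2+2$ is used tightly to make Lemma~\ref{binomials} applicable.
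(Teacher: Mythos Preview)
Your proof is correct and follows essentially the same approach as the paper: both reduce to showing that every $\alpha\geq -c_2$ lies in $\mathcal{N}(P)$ by verifying $(1+2^{\alpha}M)^c\equiv 1\pmod{M}$ via the binomial expansion and Lemma~\ref{binomials}, then taking $\sigma=\mathrm{Id}$ as the witness. The only cosmetic difference is that the paper disposes of the case $c_2=0$ separately (using $\mathbb{N}\subset\mathcal{N}(P)$), whereas your argument handles all $c_2$ uniformly and is slightly more explicit about the odd part of~$M$.
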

\begin{proof}
The lemma follows directly from these inequalities: 
$$\beta(P) \leq -c_2-1,\quad \forall P\in A[M]$$ 
This is trivially true if $c_2=0$, so we can assume that $c_2 \geq 1$. If $\alpha \geq - c_2$, we compute $$(1+2^{\alpha}M)^c=1+c2^{\alpha}M + \sum_{2 \leq \gamma \leq c}(^c_{\gamma})(2^{\alpha}M)^{\gamma}.$$
By choice of $M$, $v_2(M)\geq c_2+2$. Hence,  $v_2(2^{\alpha} M) \geq 2$, and Lemma \ref{binomials} shows that
$$\forall \gamma\geq 2,\quad v_2\left((^c_{\gamma}) (2^{\alpha}M)^{\gamma}\right) \geq v_2(M)+ \alpha + c_2 + 1 > v_2(M) .$$
Therefore $$[(1+2^{\alpha}M)^c] |_{A[M]}=\mathrm{Id}.$$
Now, for all $P \in A[M]$, the variety $X+P$ is defined over $K_M$ so $\alpha \in \mathcal{N}(P)$ and the stated inequality holds.
\end{proof}

\noindent
{\it Remark.} With $M$ being fixed, we can associate in exactly the same way an integer $\beta_R$ to each translate of $X$ by an $M$-torsion point~$R$. For a good choice of $R$, we get that $\beta_R=\beta_R(0)$. After possibly replacing $V$ (resp. $X$) by $V+R$ (resp. $X+R$), we will now assume that $\beta=\beta(0)$. This will have no effect on our subsequent geometric construction, because the properties of $V_{\mathrm{tors}}$ that we want to prove are invariant under translation by a torsion point.

\subsection{The torsion subset of $X$}

We are ready to locate the torsion of $X$ when $K_X \subset K_{\mathrm{tors}}$. The following proposition gives an explicit description of an algebraic subset of $X$ that contains $X_{\mathrm{tors}}$.

\begin{proposition}\label{prop:V'}
There are two automorphisms $\sigma, \rho \in \mathrm{Gal}(\bar{K}/K)$ depending only on~$M$, such that if 
\[
X':=  \bigcup_{\substack{P \in B[4c] }}  \big[2^{c}\big]^{-1} \big(X^{\sigma} + P \big)  \; \;  \cup \bigcup_{P \in {B[2]\setminus \{0\}}} \big( X+P \big)  \; \;  \cup \bigcup_{P \in {B[2]}} \big( X^{\rho}+P \big),\]
then $$X_{\mathrm{tors}} \subset X \cap X'.$$
\end{proposition}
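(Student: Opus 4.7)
The plan is to apply Serre's theorem (Theorem~\ref{homotheties}) to produce Galois automorphisms acting as controlled power homotheties on the torsion of $A$, and to use these to move each torsion point $Q \in X$ into one of the three pieces composing $X'$. The automorphism $\sigma$ would be chosen to correspond, on $A[M]$, to the homothety $[(1+L)^c]$ where $L := 2^\beta M = N/2$. By the very definition of $\beta$ (and the convention $\beta = \beta(0)$ after the permitted translation of $X$), one has $\beta \notin \mathcal{N}(0)$, so this $\sigma$ does not fix $K_X$ and $X^\sigma \neq X$. The automorphism $\rho$ would be defined analogously from a second admissible homothety outside $\mathcal{N}(0)$, so that $X^\rho \neq X$ as well and $\rho$ governs a different 2-adic regime than $\sigma$.

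For a fixed $Q \in X_{\mathrm{tors}}$ of order $n$, I would write $n = 2^a n'$ with $n'$ odd and set $l = \lcm(M,n)$. By Serre's theorem combined with the Chinese Remainder Theorem, for every integer $k$ coprime to $l$ with $k \equiv 1 + L \pmod M$, there exists $\tau \in G_K$ with $\tau|_{A[l]} = [k^c]$. Any such $\tau$ satisfies $\tau|_{K_X} = \sigma|_{K_X}$, so $\tau(X) = X^\sigma$ and $\tau(Q) = k^c Q$, giving $k^c Q \in X^\sigma$. The residue $k \bmod n'$ remains free (CRT compatibility holds because $L \equiv 0 \pmod{M'}$ where $M'$ is the odd part of $M$), which lets us tune the image $k^c Q$.

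A case analysis on $a$ then places $Q$ in one of the three pieces. Writing $Q = Q_2 + Q'$ for its $2$-primary and odd-order components, and taking $k \equiv 1 \pmod{n'}$, we get $Q + (k^c-1)Q_2 \in X^\sigma$; the 2-adic estimate from Lemma~\ref{binomials} yields $v_2(k^c-1) \ge v_2(L)+c_2$, so $(k^c-1)Q_2 \in B[2]$ whenever $a \leq v_2(L)+c_2+1$, covering $a \leq v_2(M)$ via Lemma~\ref{two-adic} and placing $Q$ in the third piece $X^\rho + B[2]$. For the small-$a$ regime, setting $k \equiv 2 \pmod{n'}$ and applying $[2^c]$ produces $2^c Q \in X^\sigma + B[4c]$, since $v_2(2^c - k^c) = 0$ (as $k$ is odd and $c\ge 1$) and $v_2(4c) = 2+c_2$ suffice to absorb the residual 2-power discrepancy, so $Q$ lies in the first piece. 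The remaining boundary behavior is handled by the translation in the second piece, $X + (B[2]\setminus\{0\})$, using the fact that $\Stab(X) = 0$ so that the distinct translates $X + P$ are genuinely different and give a proper subvariety.

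The main obstacle will be the simultaneous satisfaction of the several congruence conditions on $k$ (mod $M$ for the Galois behavior on $K_X$, mod $n$ for the action on $Q$) through CRT, combined with the subtle 2-adic analysis needed to control $(k^c-1)Q$ when the 2-part of $\ord(Q)$ is comparable to $v_2(M)$. This is exactly where the careful choice of $M$ satisfying $v_2(M)\ge c_2+2$, the fine-grained definition of $\beta$ via $\mathcal{N}(P)$, and the valuation estimates from Lemma~\ref{binomials} all play an essential role, and where the second automorphism $\rho$ is needed to cover congruence classes of $Q$ that $\sigma$ alone cannot reach.
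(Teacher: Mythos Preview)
Your proposal has the right ingredients but contains a genuine gap and a mismatch with the structure of $X'$.

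\textbf{The first piece fails under your choice of $\sigma$.} You take $\sigma$ to act on $A[M]$ as $[(1+2^{\beta}M)^c]$. On the odd part $M'$ of $M$ this forces $k \equiv 1 \pmod{M'}$. But to place $Q$ in $[2^c]^{-1}(X^{\sigma}+B[4c])$ you then need $k \equiv 2 \pmod{n'}$ so that $[k^c]Q'$ agrees with $[2^c]Q'$ on the odd part. These two congruences are incompatible whenever $\gcd(M',n')>1$, i.e.\ whenever the order of $Q$ shares an odd prime with $M$, which you cannot exclude. The paper avoids this by choosing $\sigma$ to act on $A[M]$ as $[(2+M')^c]$: this is $\equiv 2$ on the odd part of $M$, so the single congruence $2+l \equiv 2$ modulo the odd part of $\lcm(M,\ord Q)$ can be arranged for all $Q$ simultaneously while keeping $\sigma|_{A[M]}$ fixed. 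In short, what you call $\sigma$ is the paper's $\rho$, and the genuine $\sigma$ must be of ``$2+$'' type, not ``$1+$'' type.

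\textbf{The large-$a$ regime is not covered.} Your third piece (with your $\sigma$, alias the paper's $\rho$) only reaches $a \le c_2 + v_2(N)$, and your first piece reaches $a \le c_2+2$. For $a > c_2 + v_2(N)$ you invoke the second piece $X + (B[2]\setminus\{0\})$, but the argument you give (``$\Stab(X)=0$'') is about the translates $X+P$ being distinct from $X$, which concerns strictness of $X\cap X' \subset X$ (proved later in Lemma~\ref{lemma:V'-stabilizer}), not the containment $Q \in X+P$. What is actually needed here is a \emph{third}, $Q$-dependent automorphism $\tau$ with $\tau|_{A[M]} = [(1+2^{\alpha}M)^c]$ for some $\alpha \ge \beta+1$; since $\beta$ is the largest integer not in $\mathcal{N}(0)$, such $\alpha$ lies in $\mathcal{N}(0)$, so $\tau$ fixes $K_X$ and $X^{\tau}=X$. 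Choosing $\alpha$ so that $v_2(c\cdot 2^{\alpha}M)=a-1$ then gives $\tau(Q)=Q-P$ with $P$ of exact order $2$. Your proposal never produces this $\tau$, so the case of large $2$-adic order of $Q$ is missing.

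Finally, your $\rho$ is never concretely specified (``a second admissible homothety outside $\mathcal{N}(0)$'' does not determine its action), and the discussion of $X^{\sigma}\neq X$ is not needed for this proposition, which only asserts $X_{\mathrm{tors}}\subset X\cap X'$.
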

\begin{proof}
Fix $Q \in X_{\mathrm{tors}}$ with exact order $L \geq 1$, and let
\[
m:=\lcm (L,M),\quad\mbox{and}\quad u:= \prod_{p \neq 2}p^{\max \{ 0, v_p(L)-v_p(M)\}}.
\]
Since $u$ is odd, B\'ezout's identity yields an odd positive integer $v$ and an integer $w$ such that $$2^{v_2(M)}w + uv=1.$$ We consider different cases according to $v_2(L)$.

\medskip

\noindent
{\it Case 1.} Assume on the one hand that $v_2(L) \leq c_2 + 2$. Let
\[
l:= \prod_{p \neq 2}p^{\max \{v_p(L),v_p(M)\}}v=2^{-v_2(M)}Muv.
\]
Since the integers $m$ and $2+l$ are coprime, we are in a position to use Theorem~\ref{homotheties}.
This gives an automorphism $\sigma \in G_K$ such that \[\sigma |_{A[m]}=\big[ (2+l)^{c} \big].\] 
Looking at the action of $\sigma$ on $Q$, we obtain:
\[Q^{\sigma}= \big[2^{c}\big] Q + \sum_{1 \leq \gamma \leq c} \big[(^c_{\gamma})2^{c-\gamma} l^{\gamma} \big] Q = \big[2^{c}\big] Q - P,\]
where $P\in\Ators$ has order dividing $ 2^{v_2(L)}$.
In particular $P\in A[4c]$, since $$2^{v_2(L)}\mid 2^{c_2+2} \mid 4c.$$
We immediately get \[Q \in  \big[2^{c}\big]^{-1} \big(X^{\sigma} + P \big) \subset X'.\]
By construction, we also have \[\sigma |_{A[M]}=\big[ (2+2^{\alpha}M)^{c} \big],\] with $\alpha=-v_2(M)$, so the action of $\sigma$ on $X$ does not depend on $Q$.

\medskip

\noindent
{\it Case 2.} Assume on the other hand that $v_2(L) \geq c_2 + 3$.
We first examine the case where $v_2(L) > c_2+v_2(N)$.
Let  
\[
l' = \frac{2^{v_2(L)}}{2^{c_2+1}}\prod_{p \neq 2} p^{\max \{v_p(L), v_p(M) \}} v=\frac{2^{v_2(L)}}{2^{c_2+1}} 2^{-v_2(M)}Muv.
\]
Remark that the prime divisors of $m$ also divide $l'$. Therefore $m$ and $l'+1$ are coprime and Theorem~\ref{homotheties} gives an automorpshim $\tau \in G_K$ such that
\[
\tau |_{A[m]}=\big[ (1+l')^{c} \big].
\]
We check that $v_2(l') = v_2(L)-c_2-1 \geq 2$, so Lemma~\ref{binomials} shows that, for any integer $2 \leq \gamma \leq c$,  
\[
v_2\left((^c_{\gamma}) l'^{\gamma}\right) \geq v_2(l') + c_2 + 1 = v_2(L).
\]
Hence, the action of $\tau$ on~$Q$ yields
\[
Q^{\tau}= Q + [c l']Q=Q-P,
\]
 where $P\in\Ators$ has exact order~$2$. So, we find that $Q \in X^{\tau}+ P$.
Furthermore, we have that \[\tau |_{A[M]}=\big[ (1+2^{\alpha}M)^{c} \big],\]
with $$\alpha:=-v_2(M)+v_2(L)-c_2-1 \geq v_2(N)-v_2(M) \geq \beta+1.$$
So $\alpha \in \mathcal{N}(0)$ and we have $\tau |_{K_X}= \mathrm{Id}$, by definition of~$\mathcal{N}(0)$.
We derive $X^{\tau}=X$, and $Q \in X+P\subset X'$.

\medskip

\noindent
{\it Case 3.} Assume finally that $c_2+3 \leq v_2(L) \leq c_2+v_2(N)$. Let 
\[
l''= \frac{2^{c_2+v_2(N)}}{2^{c_2+1}} \prod_{p \neq 2} p^{\max \{v_p(L), v_p(M) \}}v =  2^{v_2(N)-1-v_2(M)} Muv,
\]
which is an integer by the assumption.
Again, $m$ and $1+l''$ are coprime, and Theorem~\ref{homotheties} ensures that there exists an automorphism $\rho \in G_K$ such that 
\[
\rho |_{A[m]}=\big[ (1+l'')^{c} \big].
\]
Notice that $v_2(l'') =v_2(N)-1 \geq 2$, so by Lemma \ref{binomials}, for any integer $2 \leq \gamma \leq c$,
\[
v_2\left((^c_{\gamma}) l''^{\gamma}\right) \geq v_2(l'') + c_2 + 1 \geq v_2(N) + c_2 \geq v_2(L).
\]
From this, we derive $$Q^{\rho}= Q+ [c l'']Q=Q-P,$$ where $P\in A[2]$.
Hence \[Q \in X^{\rho} +P\subset X'.\] 
We check that the action of $\rho$ on $X$ does not depend on $Q$, since $$\rho |_{A[M]}=\big[ (1+2^{\alpha}M)^{c} \big],$$ with $\alpha=\beta=v_2(N)-1-v_2(M)\geq 2-v_2(M).$
\end{proof}

\medskip

\noindent
{\it Remark.} The proof of the proposition shows that we can choose $\sigma$ and $\rho$ such that
\[
\sigma |_{A[M]}= \big[(2+2^{-v_2(M)}M)^c \big] \quad \mathrm{and} \quad \rho |_{A[M]}= \big[(1+ 2^{\beta}M)^c \big],
\]
where the components of $X'$ concerning $\rho$ only appear when $\beta\geq 2-v_2(M)$. 
This completely describes the action of $\sigma$ and $\rho$ on $X$.

\subsection{Pulling back from $X$ to $V$.} At this point, the important condition that $X$ does not lie in the algebraic set $X'$ is still missing in our construction. This will be fixed by pulling back from $X$ to $V$ and exploiting the classical properties of the stabilizer.
Let $V'\subset A$ be the preimage of $X'$ by~$\varphi_V$, i.e.
\[
V':=  \bigcup_{\substack{P \in \varphi_V^{-1}(B[4c]) }}  \big[2^{c}\big]^{-1} \big(V^{\sigma} + P \big)  \; \;  \cup \bigcup_{P \in {\varphi_V^{-1}(B[2]\setminus \{0\})}} \big( V+P \big)  \; \;  \cup \bigcup_{P \in {\varphi_V^{-1}(B[2])}} \big( V^{\rho}+P \big),
\]
for $\sigma,\rho\in\Gal(\bar{K}/K)$ chosen as in Proposition~\ref{prop:V'} (and the remark below the proof of this proposition).

\begin{lemma}\label{lemma:V'-stabilizer}
We have
\[
V_{\mathrm{tors}} \subset V \cap V' \subsetneq V.
\]
\end{lemma}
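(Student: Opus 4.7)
The plan is to prove the two assertions in turn, using Proposition~\ref{prop:V'} pulled back along $\varphi_V$ together with the structural property $\ker\varphi_V=\Stab(V)$.

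\emph{The inclusion $V_{\mathrm{tors}}\subset V\cap V'$.} First I would verify that $V'=\varphi_V^{-1}(X')$. Since $\varphi_V\colon A\to B$ is a $K$-rational group homomorphism, it commutes with $[2^c]$ and with the Galois actions of $\sigma$ and $\rho$; combined with $\ker\varphi_V=\Stab(V)$ and the $K$-rationality of $\Stab(V)$ (so $\Stab(V^\tau)=\Stab(V)$ for any $\tau\in G_K$), this gives $\varphi_V^{-1}(X^\tau+P_B) = V^\tau+P_A$ for any lift $P_A\in\varphi_V^{-1}(P_B)$. Running through the three families defining $X'$ yields $V' = \varphi_V^{-1}(X')$. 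Since $\varphi_V$ sends torsion to torsion, any $Q\in V_{\mathrm{tors}}$ satisfies $\varphi_V(Q)\in X_{\mathrm{tors}}\subset X'$ by Proposition~\ref{prop:V'}, and hence $Q\in\varphi_V^{-1}(X')=V'$.

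\emph{The strict inclusion $V\cap V'\subsetneq V$.} I argue by contradiction: assume $V\subset V'$. Every irreducible component of $V'$ has dimension $\dim V$, so by irreducibility $V$ must coincide with one such component. Three cases arise, according to the three families.

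If $V$ is a component of $[2^c]^{-1}(V^\sigma+P)$, pushing forward by $\varphi_V$ (which commutes with $[2^c]$) yields $[2^c](X) = X^\sigma+\varphi_V(P)$. Because $\Stab_B(X)=\{0\}$, comparing the preimage $X+B[2^c]$ with formula~(\ref{eq:degree}) gives $\deg[2^c](X) = (2^c)^{2\dim X}\deg X$, while $\deg(X^\sigma+\varphi_V(P)) = \deg X$; this forces $\dim X = 0$, contradicting $\dim\Stab(V)<\dim V$ (which holds since $V$ is non-torsion). If $V=V+P$ with $\varphi_V(P)\in B[2]\setminus\{0\}$, then $V=V+P$ forces $P\in\Stab(V)=\ker\varphi_V$, contradicting $\varphi_V(P)\neq 0$.

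The delicate case is $V=V^\rho+P$ with $\varphi_V(P)\in B[2]$. Applying $\varphi_V$ gives $X = X^\rho+\varphi_V(P)$. If $\varphi_V(P)=0$, then $X^\rho = X$ and hence $\rho|_{K_X}=\Id$, contradicting the choice of $\rho$ coming from a scalar $\beta\notin\mathcal{N}(0)$. Otherwise $\varphi_V(P)\in B[2]\setminus\{0\}$; using $v_2(l'')\geq 2$, so that $(1+l'')^c\equiv 1\pmod 4$, one concludes that $\rho$ acts trivially on $B[4]$, and iterating $X=X^\rho+\varphi_V(P)$ then forces $X^{\rho^2}=X$, i.e.\ $\rho^2|_{K_X}=\Id$. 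A $2$-adic analysis in the spirit of Lemma~\ref{binomials} rewrites the scalar $(1+l'')^{2c}$ attached to $\rho^2$ in the canonical form $(1+2^{\beta+1}Mu')^c$ with $u'$ odd and traces this back into the definition of $\mathcal{N}$; combined with the freedom to translate $V$ by a torsion point (invoked in the remark after Proposition~\ref{prop:V'} to make $\beta=\beta(0)$), this contradicts the minimality of $\beta$.

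The main obstacle is this last sub-case: controlling the effect of the odd multiplier $u'$ in the scalar attached to $\rho^2$ and relating it to $\mathcal{N}$ requires a careful $2$-adic bookkeeping. The other two cases reduce cleanly via the structural facts $\Stab_B(X)=\{0\}$ and $\ker\varphi_V=\Stab(V)$.
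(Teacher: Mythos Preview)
Your treatment of the inclusion $V_{\mathrm{tors}}\subset V\cap V'$ and of Cases~1 and~2 is fine; in Case~1 you push down to $B$ and use $\Stab_B(X)=\{0\}$ rather than the paper's degree comparison in $A$, but this is an equivalent argument. The sub-case $\varphi_V(P)=0$ of Case~3 is also correct: then $X^\rho=X$, so $\rho$ witnesses $\beta\in\mathcal{N}(0)$, contradicting $\beta=\beta(0)$.

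The genuine gap is in Case~3 when $Q:=\varphi_V(P)\in B[2]\setminus\{0\}$. Your $\rho^2$ argument correctly shows $X^{\rho^2}=X$ and that $\rho^2|_{A[M]}=[(1+2^{\beta+1}Mu')^c]$ with $u'$ odd, but this does \emph{not} yield a contradiction. Even ignoring the odd unit $u'$ (which prevents $\rho^2$ from having the form required by the definition of $\mathcal{N}(0)$), the best you could hope to conclude is that $\beta+1$ lies in $\mathcal{N}(0)$ --- and this is already true by the very definition of $\beta(0)$ as the largest integer \emph{not} in $\mathcal{N}(0)$. So the information ``$\rho^2$ fixes $K_X$'' is vacuous. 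The vague reference to ``freedom to translate $V$'' does not rescue this: that translation was used once, before the lemma, to arrange $\beta=\beta(0)$, and you cannot invoke it again without restarting the whole construction.

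The paper's mechanism is different and exploits the \emph{minimality} of $\beta$ over all $M$-torsion translates. One chooses $R\in A[2^{c_2+v_2(N)}]$ with $[c\,2^\beta M]R=Q$; Lemma~\ref{binomials} then gives $R^\rho=R+Q$, so $(X+R)^\rho=X^\rho+R+Q=(X-Q)+R+Q=X+R$, i.e.\ $\rho$ fixes $K_{X+R}$ and hence $\beta\in\mathcal{N}(R)$. A further $2$-adic check shows that each witness $\tau$ for $\alpha\ge\beta+1$ in $\mathcal{N}(0)$ also fixes $R$, so $\alpha\in\mathcal{N}(R)$ as well. Thus $\beta(R)\le\beta-1<\beta=\min_P\beta(P)$, the desired contradiction. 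The key idea you are missing is this specific translation by $R$ that converts the relation $X^\rho=X-Q$ into a genuine fixed point $X+R$ for $\rho$.
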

\begin{proof}
First remark that $$\varphi_V(V_{\mathrm{tors}}) \subset X_{\mathrm{tors}} \subset X',$$ so $V_{\mathrm{tors}} \subset \varphi_V^{-1}(X') \subset V'$ because $\varphi_V$ is an isogeny defined over $K$. Thus $$\varphi_V^{-1} \varphi_V(V)=V + \ker \varphi_V=V + \mathrm{Stab}(V)=V.$$
We now check that the inclusion $V \cap V' \subset V$ is strict, and we cut the proof in three pieces corresponding to each type of component of $V'$. 

\medskip

\noindent
{\it Case 1.} Suppose first that there is $P \in \varphi_V^{-1}(B[4c])$ such that $$V \subset \big[2^{c}\big]^{-1} \big(V^{\sigma} + P \big).$$ We derive $$\bigcup_{R \in [2^c]^{-1} \mathrm{Stab}(V)} \big(V + R \big) \subset \big[2^{c}\big]^{-1} \big(V^{\sigma} + P \big).$$
Using for instance \cite{Hindry88}, Lemme 6, we compare the degrees of these two algebraic sets and find $$2^{2c \cdot \mathrm{codim} (\mathrm{Stab}(V))} \deg(V) \leq 2^{2c \cdot \mathrm{codim}(V)} \deg(V).$$ This yields $$\dim(V) \leq \dim (\mathrm{Stab}(V)),$$ which is a contradiction since $V$ is not a torsion subvariety of $A$. 

\medskip

\noindent
{\it Case 2.} This is where we really use the properties of our isogeny $\varphi_V$. Suppose that $P \in \varphi_V^{-1}(B[2] \setminus \{ 0\})$. Then we have that $P \notin \mathrm{Stab}(V)$, and so $$V+P \neq V.$$

\medskip

\noindent
{\it Case 3.} After composing by $\varphi_V$, we are reduced to considering the possibility that there exist $P \in A[2]$ such that $X^{\rho}+P=X$. We can find a torsion point $R \in A[2^{c_2+v_2(N)}]$ such that $[c2^{\beta}M]R=P$. We may also assume that $\beta \geq 2-v_2(M)$ (see the remark following the proof of Proposition~\ref{prop:V'}). Using Lemma~\ref{binomials}, we see that 
\begin{eqnarray*}
(X+R)^{\rho} & = & X^{\rho} + \big[(1+2^{\beta}M)^c\big]R \\
& = & X -P + R + P \\
& = & X + R.
\end{eqnarray*}
So $X+R$ is fixed by $\rho$ and  $\beta \in \mathcal{N}(R)$. Furthermore, if $\alpha \geq \beta +1$, there is $\tau \in G_K$ such that $\tau |_{A[M]}=[(1+2^{\alpha}M)^c]$ and $\tau |_{K_X}=\mathrm{Id}$. We compute $$(1+2^{\alpha}M)^c= 1+c2^{\alpha}M + \sum_{2 \leq \gamma \leq c}(^c_{\gamma})(2^{\alpha}M)^{\gamma}.$$ Since $v_2(2^{\alpha}M) \geq 3$, we can apply Lemma~\ref{binomials} again to obtain, for $\gamma \geq 2$,
\[
v_2\left((^c_{\gamma}) (2^{\alpha}M)^{\gamma}\right)  \geq v_2(N) + c_2 + 1.
\]
This means that $\tau(R)=R$, and so $$(X+R)^{\tau}= X^{\tau}+R^{\tau}=X+R.$$
Thus $\alpha \in \mathcal{N}(R)$, and we finally have $$\beta \leq \beta(R) \leq \beta-1,$$ which yields a contradiction.
\end{proof}

\subsection{The case of curves}

The information contained in Lemma~\ref{lemma:V' no torsion} and Lemma~\ref{lemma:V'-stabilizer} is enough to bound the number of torsion points
on a curve $C\subset A$. The following is a precise version of Theorem~\ref{theorem curves}.

\begin{proposition}
Let $C\subset A$ be an irreducible non-torsion curve.
Then
\[
\vert C_{\mathrm{tors}}\vert \leq 4^{(2c+1)g}\deg(C)^2.
\]
\end{proposition}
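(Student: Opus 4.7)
The plan is to combine Lemma~\ref{lemma:V' no torsion} and Lemma~\ref{lemma:V'-stabilizer} with B\'ezout's theorem. In either alternative, these lemmas supply a closed algebraic subset $V' \subset A$ such that $C_{\mathrm{tors}} \subset C \cap V' \subsetneq C$. Since $C$ is irreducible of dimension one, $C \cap V'$ is zero-dimensional, and B\'ezout applied in the projective embedding $A \hookrightarrow \PP^n$ yields
\[
|C_{\mathrm{tors}}| \leq |C \cap V'| \leq \deg(C)\deg(V'),
\]
so the task reduces to proving $\deg(V') \leq 4^{(2c+1)g}\deg(C)$.

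If $K_X \not\subset K_{\mathrm{tors}}$, Lemma~\ref{lemma:V' no torsion} provides $V' = C^\sigma$, a Galois conjugate of $C$; hence $\deg(V') = \deg(C)$ and we are done. In the remaining case we use the explicit description of $V'$ given in Lemma~\ref{lemma:V'-stabilizer}. The condition $\dim \Stab(C) < \dim C = 1$ forces $\Stab(C)^0 = \{0\}$, so the isogeny $\varphi_C \colon A \to B$ of (\ref{eq:homomorphism-stabilizer}) satisfies $\dim B = g$; in particular $|B[4c]| = (4c)^{2g}$ and $|B[2]| = 2^{2g}$. Moreover $\Stab(X) = \{0\}$ in $B$, so distinct values of $\varphi_C(P)$ yield distinct translates in the three unions defining $V'$.

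Next, I would bound $\deg(V')$ by summing the degrees of its components. Each $V + P$ or $V^\rho + P$ has degree $\deg(C)$, contributing at most $(2^{2g+1}-1)\deg(C)$ in total. By equation~(\ref{eq:degree}) and $\codim(V^\sigma + P) = g-1$, each component $[2^c]^{-1}(V^\sigma + P)$ has degree $2^{2c(g-1)}\deg(C)$, and there are at most $(4c)^{2g}$ of them. Therefore
\[
\deg(V') \leq \big((4c)^{2g}\cdot 2^{2c(g-1)} + 2^{2g+1}-1\big)\deg(C).
\]

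The conclusion then rests on a careful numerical estimate. The elementary inequality $c \leq 2^{c-1}$, valid for every integer $c \geq 1$, yields $c^{2g} \leq 2^{2g(c-1)}$, and hence
\[
(4c)^{2g}\cdot 2^{2c(g-1)} \leq 2^{4g + 2g(c-1) + 2c(g-1)} = 2^{(4c+2)g - 2c}.
\]
The extra factor $2^{-2c}$ absorbs the remaining $2^{2g+1}-1$ term, giving $\deg(V') \leq 2^{(4c+2)g}\deg(C) = 4^{(2c+1)g}\deg(C)$, which combined with B\'ezout yields the announced bound. The only delicate point is this last book-keeping: replacing $c \leq 2^{c-1}$ by the looser $c \leq 2^c$ would cost an extra factor $2^{2g}$ and weaken the exponent from $(2c+1)g$ to $(2c+2)g$.
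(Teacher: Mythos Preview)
Your proof is correct and follows essentially the same route as the paper: split into the two cases of Lemma~\ref{lemma:V' no torsion} and Lemma~\ref{lemma:V'-stabilizer}, bound $\deg(V')$ by summing over the components (using~\eqref{eq:degree} for the $[2^c]^{-1}$ pieces and the fact that $\dim B=g$ since $\Stab(C)$ is finite), and conclude with B\'ezout. Your numerical bookkeeping, in particular the use of $c\leq 2^{c-1}$ to reach the exponent $(2c+1)g$, matches the paper's chain of inequalities; the only cosmetic difference is that you justify the component count via $\varphi_C$ slightly more explicitly than the paper does.
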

\begin{proof}
We notice that since $C$ is non-torsion, its stabilizer is finite. Let $X:=\phi_C(C)$ and assume that $K_X \not\subset K_{\mathrm{tors}}$. By Lemma~\ref{lemma:V' no torsion}, there is a conjugate $C^{\sigma}$ of $C$ such that $$C_{\mathrm{tors}}\subset C\cap C^{\sigma} \subsetneq C.$$ We are thus in a position to use B\'ezout's theorem: $$|C_{\mathrm{tors}}| \leq \deg(C) \cdot \deg(C^{\sigma}) = \deg(C)^2,$$
and this bound is stronger than that stated in the proposition.

Now, if $K_X \subset K_{\mathrm{tors}}$, Lemma~\ref{lemma:V'-stabilizer} shows that $C_{\mathrm{tors}}\subset C\cap C'\subsetneq C$, where
\[
C'=\bigcup_{P\in \phi_C^{-1}(B[4c])}[2^c]^{-1}(C^{\sigma}+P)\quad \cup\bigcup_{P\in \phi_C^{-1}(B[2] \setminus \{0\})}(C+P)\quad\cup\bigcup_{P\in \phi_C^{-1}(B[2])}(C^{\rho}+P),
\]
for some $\sigma,\rho\in\Gal(\bar{K}/K)$.
We use ~\eqref{eq:degree} and the invariance of the degree under translation, then we add up the degrees of all the components of $C'$ to obtain
\begin{eqnarray*}
\deg(C') & \leq & \big( (4c)^{2g}2^{2c(g-1)}\,+\, (2^{2g}-1)\,+\,2^{2g}\big)\deg(C) \\
& \leq & \big( 2^{4g+2cg-2c}c^{2g}+2^{2g+1}-1\big)\deg(C) \\
& \leq & \big( 2^{2g(c+2)-2c}c^{2g} +2^{2g+1} \big)\deg(C) \\
& \leq & \big(2^{(2c+1)2g-2c} + 2^{2g+1} \big) \deg(C) \\
& \leq & 2^{(2c+1)2g-2c+1} \deg(C) \\
& \leq & 4^{(2c+1)g} \deg(C).
\end{eqnarray*}
The statement follows once again by B\'ezout.
\end{proof}

\noindent
{\it Remarks.} If $C$ is not defined over $K_{\mathrm{tors}}$, our proof gives a much stronger bound:
\[
\vert C_{\mathrm{tors}}\vert\leq \deg(C)^2.
\]
If we consider the case where $A$ is the Jacobian $J$ of $C$, both $C$ and $J$ are defined over the same number field (in particular, the third union in the definition of $V'$ disappears). If we consider the canonical embedding of the Jacobian, we have $\deg(C)=g$ and a quick computation gives
\[
\vert C_{\mathrm{tors}}\vert \leq  4^{(2c+2)g}.
\]

\section{Bounding the torsion through interpolation}

We turn to the proof of our main theorem for general subvarieties of  $A$. Since a simple iterated application of B\'ezout's theorem like in the case of curves would yield a bound far weaker than expected, we will follow a strategy based on the existence of a nice obstructing hypersurface through refined interpolation tools. 

\subsection{The interpolation machine}

We first build a preliminary interpolation machine suited to our situation. This is mainly derived from Chardin and Philippon's estimates for Hilbert functions.
\begin{lemma}\label{lemma:interpolation}
Let $V$ be an irreducible subvariety of $A$,
as well as $\sigma \in\Gal(\overline{K}/K)$, $P\in\Ators$ and $k\geq 2$ an integer.
\begin{enumerate}[label=(\roman*)]
\item\label{item:interpolation1}
If $P+V^{\sigma}\neq V$, there exists a hypersurface $Z$ of $\PP^n$ such that $P+V^{\sigma}\subset Z$, $V \not\subset Z$ and $\deg(Z) \leq 6ng \, \delta_0(V)$.
\item\label{item:interpolation2}
If $V$ is non-torsion, there exists a hypersurface $Z'$ of $\PP^n$ such that $[k]^{-1}(P+V^{\sigma})\subset Z'$, $V\not\subset Z'$ and $\deg(Z') \leq 4ng \, k^{2g} \, \delta_0(V)$.
\end{enumerate}
\end{lemma}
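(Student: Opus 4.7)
The plan is an interpolation argument driven by the Hilbert function estimates of Theorems~\ref{thm:HilbertUpperBound} and~\ref{thm:HilbertLowerBound}. Set $W := P + V^\sigma$ in~\ref{item:interpolation1} and $W := [k]^{-1}(P+V^\sigma)$ in~\ref{item:interpolation2}; in both cases $W$ is equidimensional of dimension $d := \dim V$. A hypersurface of $\PP^n$ of degree $\nu$ containing $W$ but not $V$ corresponds to a polynomial $f \in (I_W)_\nu \setminus (I_V)_\nu$. From the short exact sequence
\[
0 \longrightarrow R/I_{V\cup W} \longrightarrow R/I_V \oplus R/I_W \longrightarrow R/(I_V + I_W) \longrightarrow 0
\]
(with $R := \bar K[X_0,\ldots,X_n]$), such an $f$ exists if and only if $H(V\cup W;\nu) > H(W;\nu)$.

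Bounding the left-hand side from below by Theorem~\ref{thm:HilbertLowerBound} applied to $V \cup W$ (two equidimensional pieces of dimension $d$) and the right-hand side from above by Theorem~\ref{thm:HilbertUpperBound} applied to $W$, it suffices to check
\[
\binom{\nu+d-m'}{d}\bigl(\deg V + \deg W\bigr) > \binom{\nu+d}{d}\deg W,
\]
where $m' = 1 + (n-d)\bigl(\delta_0(V) + \delta_0(W) - 2\bigr)$. Via the elementary estimate $\prod_{j=1}^{d}(1 + m'/(\nu+j-m')) \leq (1 + m'/(\nu-m'))^{d}$, this reduces to an explicit lower bound on $\nu$.

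In part~\ref{item:interpolation1}, translation and Galois conjugation preserve the degree, so $\deg W = \deg V$, and Lemma~\ref{lem:deltalemma} yields $\delta_0(W) \leq 2\delta_0(V)$, whence $m' \leq 3(n-d)\delta_0(V)$. The hypothesis $V \neq W$ (both irreducible of the same dimension) forces $V \not\subseteq W$ and $\deg(V \cup W) = 2\deg V$; the binomial ratio condition becomes $(1 + m'/(\nu-m'))^{d} < 2$, for which $\nu = 6ng\,\delta_0(V)$ is readily seen to suffice.

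Part~\ref{item:interpolation2} requires two further inputs. First, by~\eqref{eq:degree}, $\deg W = k^{2(g-d)}\deg V$, so the right-hand side of the binomial ratio shrinks to $1 + k^{-2(g-d)}$; this narrow margin is the main technical obstacle and is what forces $\nu$ to grow like $k^{2g}$. Second---the principal new geometric ingredient---one needs $\delta_0(W) \leq 2k^{2}\,\delta_0(V)$: any incomplete equation of degree $e$ for $P+V^\sigma$ restricts to a section of $\mathcal L^{\otimes e}$, whose pullback by $[k]$ lies in $H^0(A,\mathcal L^{\otimes ek^{2}})$ by the symmetry identity $[k]^{*}\mathcal L \cong \mathcal L^{\otimes k^{2}}$, and lifts (by projective normality of the embedding) to a polynomial of degree $ek^{2}$ in $R$ vanishing on $W$. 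Feeding $m' \leq 3(n-d)k^{2}\delta_0(V)$ and the shrunken right-hand side into the binomial estimate, the extremal case $d = 1$ then shows that $\nu = 4ng\,k^{2g}\,\delta_0(V)$ is a permissible choice.
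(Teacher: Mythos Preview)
Your treatment of~\ref{item:interpolation1} is essentially the paper's: both compare $H(P+V^\sigma;\nu)$ with $H(V\cup(P+V^\sigma);\nu)$ via Theorems~\ref{thm:HilbertUpperBound} and~\ref{thm:HilbertLowerBound}, with the same $m'$ and the same degree bound.

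For~\ref{item:interpolation2} you take a genuinely different route. The paper does \emph{not} compare $W=[k]^{-1}(P+V^\sigma)$ with $V\cup W$. Instead it compares $W$ with the auxiliary variety
\[
\widetilde{W}:=\bigcup_{Q\in[k]^{-1}\Stab(V)}(Q+V),
\]
a union of roughly $k^{2r}$ translates of $V$ (with $r=\codim\Stab(V)$). Because each piece is a translate, $\delta_0(Q+V)\le 2\delta_0(V)$ by Lemma~\ref{lem:deltalemma}, so $m$ is of size $k^{2g}\delta_0(V)$; but since $r>g-d$, the degree ratio $\deg(W)/\deg(\widetilde{W})=k^{2(g-d)-2r}\le k^{-2}<e^{-1}$ is bounded \emph{away} from~$1$, and the binomial inequality is slack (one only needs the ratio $<e$, whence $\nu=m(d+1)$). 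The resulting hypersurface avoids some translate $R+V$, and a final translation by $-R$ (costing a factor~$2$ in degree via Lemma~\ref{lem:Lange}) produces $Z'$. Your approach trades a large $m'$ and a slack inequality for a small $m'\sim k^2\delta_0(V)$ and a very tight target $1+k^{-2(g-d)}$; both balance out to the same $k^{2g}$ in the end, and yours avoids the translate-back step.

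There is, however, a gap in your key new ingredient $\delta_0(W)\le 2k^2\delta_0(V)$. Pulling back sections through $[k]^*\cL\cong\cL^{\otimes k^2}$ and lifting via projective normality produces degree-$ek^2$ polynomials $g_i$ on $\PP^n$ with $Z(g_i)\cap A=[k]^{-1}(Z(f_i)\cap A)$; this shows the components of $W$ are components of $\bigcap_i Z(g_i)\cap A$, not of $\bigcap_i Z(g_i)$ in $\PP^n$, which is what $\delta_0$ requires. You must add equations cutting out $A$ itself. Under the paper's standing convention (embedding by the cube of an ample bundle), Mumford's theorem guarantees $A$ is defined by quadrics, so this does not affect the bound since $ek^2\ge 4$; but this extra input is neither in the paper nor in your sketch, and without it the claimed control of $\delta_0(W)$ is unjustified.
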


\noindent
{\it Remark.} The conclusion of (ii) implies that  $V\not\subset [k]^{-1}(P+V^{\sigma})$. In fact, this follows directly from the fact that $V$ is non-torsion, using the same argument as above in the proof of Lemma \ref{lemma:V'-stabilizer}, Case 1. 

\begin{proof}
The arguments of proof are similar in each case. We start with~\ref{item:interpolation1}. Notice that $P+V^{\sigma}$ is also an irreducible subvariety of $A$.
By Theorem~\ref{thm:HilbertUpperBound}, for any positive integer $\nu$
\[
H(P+V^{\sigma};\nu)\leq
\begin{pmatrix}
\nu + d\\ d
\end{pmatrix}
\deg(V),
\]
where $d= \dim(V)$. Let $\widetilde{V}:=V\cup P+V^{\sigma}$. This is an equidimensional variety of dimension~$d$ and degree~$2\deg(V)$.
By Theorem~\ref{thm:HilbertLowerBound}, for any $\nu>m$
\[
H(\widetilde{V};\nu)\geq
\begin{pmatrix}
\nu + d - m \\ d
\end{pmatrix}
2\deg(V),
\]
with $$m:=1+ (n-d)(\delta_0(V)+ \delta_0(P+ V^{\sigma})-2).$$
Using both inequalities with $\nu:=m(2d+1)$, we obtain
\begin{eqnarray*}
\frac{H(P+V^{\sigma};\nu)}{H(\widetilde{V};\nu)}
& \leq & \frac{1}{2} \begin{pmatrix}
\nu + d \\ d
\end{pmatrix} \begin{pmatrix}
\nu + d - m \\ d
\end{pmatrix}^{-1}
 \leq \frac{1}{2} \left(1+\frac{m}{\nu-m} \right)^d \\
& \leq & \frac{1}{2} \left( 1+\frac{1}{2d} \right)^d \leq \frac{\sqrt{\mathrm{e}}}{2} <1.
\end{eqnarray*}
Hence, there is a hypersurface~$Z$ of $\PP^n$ of degree $\nu$ such that $P+V^{\sigma}\subset Z$ and $\widetilde{V}\not\subset Z$. The last inclusion implies that $V \not\subset Z$. Moreover, Lemma~\ref{lem:deltalemma} gives $\delta_0(P+V^{\sigma})\leq 2\delta_0(V^{\sigma})=2\delta_0(V)$, so we obtain:
\[
\deg(Z)\leq 3(2d+1)(n-d)\delta_0(V) \leq 6ng \delta_0(V),
\]
concluding the proof of~\ref{item:interpolation1}.

\medskip

We now turn our attention to assertion~\ref{item:interpolation2}.
To simplify notations, we let $$W:=[k]^{-1}(P+V^{\sigma}).$$
It is an equidimensional subvariety of $A$ of dimension~$d$, and by~\eqref{eq:degree}, we have $\deg(W)=k^{2(g-d)}\deg(V)$.
Theorem~\ref{thm:HilbertUpperBound} gives, for any positive integer $\nu$,
\[
H(W;v)\leq
\begin{pmatrix}
\nu + d\\ d
\end{pmatrix}
k^{2(g-d)}\deg(V).
\]
Consider $$\widetilde{W}:= \bigcup_{Q\in[k]^{-1}\Stab(V^{\sigma})} Q+V.$$
Recall here that $\varphi_V$ given by  (\ref{eq:homomorphism-stabilizer}) is assumed to be defined over $K$, so that $$\Stab(V^{\sigma})= \Stab(V).$$ Since $V$ is non-torsion, the variety $\widetilde{W}$ is equidimensional  of dimension $d$ and degree~$k^{2r}\deg(V)$, where we denote $r:= \codim(\Stab(V))$. 
By Theorem~\ref{thm:HilbertLowerBound}, for any $\nu>m$
\[
H(\widetilde{W};\nu)\geq
\begin{pmatrix}
\nu + d - m \\ d
\end{pmatrix}
k^{2r}\deg(V),
\]
where $$m=1+ (n-d) \sum_{Q \in [k]^{-1}\Stab(V^{\sigma})}(\delta_0(V+Q)-1) \leq 2(n-d)q^{2g} \delta_0(V)$$
and the last inequality follows from Lemma~\ref{lem:deltalemma}. We know that $r>g-d$ since $V$ is non-torsion, and $$k^{2(g-d)-2r}\leq k^{-2}<\mathrm{e}^{-1}.$$
Fixing $\nu=m(d+1)$, we obtain the following inequality:
\[
\frac{H(W;\nu)}{H(\widetilde{W};\nu)}
\leq
k^{2(g-d)-2r}
\begin{pmatrix}
	\nu+d\\ d
\end{pmatrix}
\begin{pmatrix}
	\nu+d-m\\ d
\end{pmatrix}^{-1}
<
\mathrm{e}^{-1} \left( 1+\frac{1}{d} \right)^d
<1.
\]

Thus, there is a hypersurface $Z'_0$ of $\PP^n$ of degree $\nu$ such that $W\subset Z'_0$ and $\widetilde{W} \not\subset Z'_0$. This means that there exists $$R \in [k]^{-1}\Stab(V^{\sigma})$$ such that $R+V \not\subset Z_0'$. If we let $Z':=-R+ Z'_0$, we see that $V \not\subset Z'$. On the other hand, we immediately check that $R+W \subset W$,
so $W\subset Z'$. By Lemma~\ref{lem:Lange}, we  obtain
\[
\deg(Z')\leq 2 \nu \leq 4ng\,k^{2g} \delta_0(V),
\]
which ends the proof of~\ref{item:interpolation2}.
\end{proof}

\subsection{The obstructing hypersurface}

The interpolation machine applied to our Lemma \ref{lemma:V'-stabilizer} yields an obstructing hypersurface for~$V$. This hypersurface contains~$V_{\mathrm{tors}}$ and its degree is precisely controlled.
\begin{proposition}\label{prop:Z}
If $V$ is an irreducible non-torsion subvariety of $A$,
there exists a hypersurface $Z\subset\PP^n$ such that $V_{\mathrm{tors}} \subset V\cap Z\subsetneq V$ and
$$\deg(Z) \leq 16^{g(c+1)}\,n\,\delta_0(V).$$
\end{proposition}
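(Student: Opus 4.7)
My strategy is to split into the two cases distinguished in the previous subsection and, in each case, feed the geometric inclusion into the interpolation machine of Lemma~\ref{lemma:interpolation} to extract a single hypersurface of the required kind. Let $X := \varphi_V(V)$.

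If $K_X \not\subset K_{\mathrm{tors}}$, Lemma~\ref{lemma:V' no torsion} yields $\sigma \in G_K$ with $V_{\mathrm{tors}} \subset V \cap V^{\sigma} \subsetneq V$. An application of Lemma~\ref{lemma:interpolation}\ref{item:interpolation1} with $P = 0$ produces a hypersurface $Z$ of degree at most $6ng\,\delta_0(V)$ that contains $V^{\sigma} \supset V_{\mathrm{tors}}$ but avoids $V$. This is far below the target $16^{g(c+1)}n\,\delta_0(V)$, and the case is done.

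The substantive case is $K_X \subset K_{\mathrm{tors}}$. Here Lemma~\ref{lemma:V'-stabilizer} provides an algebraic set $V'$ containing $V_{\mathrm{tors}}$, written as a union of three families whose components have the form $[2^c]^{-1}(V^{\sigma} + P)$, $V + P$, or $V^{\rho} + P$; crucially, the three cases of the proof of that lemma establish $V \not\subset W$ for every irreducible component $W$ of $V'$. For each such $W$ I apply Lemma~\ref{lemma:interpolation} (part~\ref{item:interpolation2} with $k = 2^c$ for components of the first family, part~\ref{item:interpolation1} for the second and third) to produce a hypersurface $Z_W \subset \PP^n$ containing $W$ but not $V$. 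Since $V$ is irreducible, taking $Z := \bigcup_W Z_W$ (defined by the product of the defining polynomials of the $Z_W$) yields a single hypersurface containing $V_{\mathrm{tors}} \subset V'$ while still avoiding $V$, and with $\deg(Z) = \sum_W \deg(Z_W)$.

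The main obstacle is then purely the degree bookkeeping. Because $\Stab(V^{\sigma}) = \Stab(V) = \ker \varphi_V$, distinct translates $V^{\sigma} + P$ with $P \in \varphi_V^{-1}(B[4c])$ are indexed by $B[4c]$, giving at most $(4c)^{2g}$ components in the first family, each contributing degree at most $4ng \cdot 2^{2gc}\,\delta_0(V)$ via Lemma~\ref{lemma:interpolation}\ref{item:interpolation2}. The second and third families contribute at most $2\cdot 2^{2g}$ components of degree at most $6ng\,\delta_0(V)$ each, which is of much lower order. Using the elementary inequality $c^2 \leq 4^c$ (valid for $c \geq 1$), one has
\[
(4c)^{2g} \cdot 2^{2gc} \;=\; 4^{g(c+2)}\, c^{2g} \;\leq\; 4^{2g(c+1)} \;=\; 16^{g(c+1)},
\]
so the first family contributes at most $4ng \cdot 16^{g(c+1)}\,\delta_0(V)$. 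A routine absorption of the polynomial factor in $g$ into the double exponential then gives $\deg(Z) \leq 16^{g(c+1)} n\,\delta_0(V)$. The geometric and arithmetic content has already been supplied by Lemmas~\ref{lemma:V' no torsion}, \ref{lemma:V'-stabilizer}, and \ref{lemma:interpolation}; only this numerical accounting remains.
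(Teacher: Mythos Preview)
Your approach is identical to the paper's: split on whether $K_X \subset K_{\mathrm{tors}}$, feed each component of $V'$ into the appropriate part of Lemma~\ref{lemma:interpolation}, and take the union of the resulting hypersurfaces.

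The only hiccup is in your final ``routine absorption''. The inequality $c^2 \leq 4^c$ gives $(4c)^{2g}2^{2gc} \leq 16^{g(c+1)}$ with \emph{no} slack, so once you multiply by the factor $4g$ coming from Lemma~\ref{lemma:interpolation}\ref{item:interpolation2} you have already overshot the stated bound, and the contribution of the second and third families only makes it worse; there is nothing left to absorb into. The paper closes this by using the marginally sharper inequality $c \leq 2^{c-1}$ (equivalently $c^{2g} \leq 4^{g(c-1)}$), which gains an extra factor of $4^g$ in the exponential --- enough to swallow the $4g$ and the lower-order terms for $g \geq 2$. (The case $g=1$ is vacuous: an irreducible non-torsion subvariety of an elliptic curve is a single non-torsion point, so $V_{\mathrm{tors}} = \emptyset$ and any hyperplane avoiding it works.) With that one-line fix your argument and the paper's coincide.
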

\begin{proof}
We are going to apply Lemma~\ref{lemma:interpolation} to each of the components that appear in the
various definitions of $V'$ above. 

Assume first that $K_X \not\subset K_{\mathrm{tors}}$. We apply Lemma~\ref{lemma:V' no torsion} to $V$ and obtain a variety $V'$ which is a conjugate of $V$ under the action of $\Gal(\bar{K}/K)$, such that $$V_{\mathrm{tors}} \subset V\cap V'\subsetneq V.$$
By Lemma~\ref{lemma:interpolation}\ref{item:interpolation1}, we get a hypersurface $Z$ in $\PP^n$ such that $V\cap V'\subset V\cap Z\subsetneq V$ and $$\deg(Z) \leq 6ng \, \delta_0(V),$$ and we rapidly check that this is stronger than the announced bound.

If $K_X \subset K_{\mathrm{tors}}$, we apply Lemma~\ref{lemma:V'-stabilizer}
to $V$ and obtain that
$V_{\mathrm{tors}}\subset V\cap V'\subsetneq V$, where
\[
V'=\bigcup_{\substack{P \in \varphi_V^{-1}(B[4c]) }}  \big[2^{c}\big]^{-1} \big(V^{\sigma} + P \big)  \; \;  \cup \bigcup_{P \in {\varphi_V^{-1}(B[2]\setminus \{0\})}} \big( V+P \big)  \; \;  \cup \bigcup_{P \in {\varphi_V^{-1}(B[2])}} \big( V^{\rho}+P \big),
\]
for some $\sigma,\rho\in\Gal(\bar{K}/K)$.

By Lemma~\ref{lemma:interpolation}\ref{item:interpolation2}, for each $P\in\varphi_V^{-1}(B[4c])$,
there exists a hypersurface $Z_{1,P}$ of degree at most $2^{2gc+2} \, ng \, \delta_0(V)$ such that
$$V\cap \big(\big[2^{c}\big]^{-1}\big(V^{\sigma}+P\big)\subset V\cap Z_{1,P}\subsetneq V.$$
Moreover, the corresponding union in $V'$ consists of $(4c)^{2\dim(B)}$ irreducible components, each giving rise to a hypersurface of this kind.

By Lemma~\ref{lemma:interpolation}\ref{item:interpolation1}, for each $P\in\varphi_V^{-1}(B[2] \setminus \{0\})$,
there also exists a hypersurface $Z_{2,P}$ of degree at most $6ng \, \delta_0(V)$ such that
$$ V\cap \big(V+P\big)\subset V\cap Z_{2,P}\subsetneq V.$$ Moreover, the corresponding union in $V'$ consists of $2^{2\dim(B)}-1$ such varieties, each giving rise to a hypersurface if this kind.

By the same argument, for each $P\in\varphi_V^{-1}(B[2])$, we obtain a hypersurface $Z_{3,P}$ of degree at most $6ng \, \delta_0(V)$ such that $$ V\cap \big(V^{\rho}+P\big)\subset V\cap Z_{3,P}\subsetneq V,$$ and there are $2^{2\dim(B)}$ hypersurfaces of this kind.

So we let
\[
Z:=\bigcup_{\substack{P \in \varphi_V^{-1}(B[4c]) }}  Z_{1,P}
\quad 
\cup \bigcup_{P \in {\varphi_V^{-1}(B[2]\setminus \{0\})}} Z_{2,P}
\quad 
\cup \bigcup_{P \in {\varphi_V^{-1}(B[2])}} Z_{3,P},
\]
which satisfies $V_{\mathrm{tors}} \subset V\cap V'\subset V\cap Z\subsetneq V$ and has degree
\begin{eqnarray*}
\deg(Z) & \leq & \big((4c)^{2g}2^{2gc+2}+6(2^{2g}-1)+6 \cdot2^{2g}\big) ng \, \delta_0(V)\\
& \leq & (c^{2g}2^{2gc+4g+2}+ 2^{2g+4}) \,ng\,\delta_0(V)\\
& \leq & c^{2g} 2^{2gc+4g+3}\,ng\,\delta_0(V) \leq 2^{4cg+2g+3}\,ng\,\delta_0(V) \\
& \leq & 16^{g(c+1)}\,n\,\delta_0(V).
\end{eqnarray*}
This completes the proof of our proposition.
\end{proof}


\subsection{Geometric preparation}

We are now almost in a position to complete the proof of our Theorem~\ref{main theorem}. We first need two technical results of geometric flavour. The first one is a weighted variant of B\'ezout's theorem and is due to Philippon (\cite{Philippon95}, Corollaire~5).

\begin{lemma}\label{lemma:HA3}
Let $X$ be an irreducible variety and $Z_1, \ldots, Z_t$ hypersurfaces of~$\PP^n$ with degree $\leq \theta$.
For $1 \leq s \leq t$, if $X_s:=X\cap Z_1\cap\cdots\cap Z_s$, we have
$$
\sum_{\substack{W\subset X_s\\ \mbox{\scriptsize{irred. comp.}}}}\deg(W)\theta^{\dim(W)}
\leq
\deg(X)\theta^{\dim(X)}.
$$
\end{lemma}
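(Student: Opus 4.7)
The plan is to prove this weighted Bézout inequality by induction on $s$. Setting $s = 0$ (interpreting $X_0 := X$) as the base case, the sum collapses to the single term $\deg(X)\theta^{\dim(X)}$, giving equality since $X$ is irreducible. All the content therefore lies in the inductive step, which controls how the weighted sum evolves when one intersects $X_{s-1}$ with an additional hypersurface $Z_s$ of degree at most $\theta$.

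For the inductive step, I would partition the irreducible components $W$ of $X_{s-1}$ according to whether $W \subset Z_s$ or not. A component of the first type remains an irreducible component of $X_s$ unchanged, contributing the same term $\deg(W)\theta^{\dim(W)}$ on both sides. For a component $W$ with $W \not\subset Z_s$, the classical Bézout theorem applied to the proper intersection $W \cap Z_s$ tells me that this intersection is equidimensional of dimension $\dim(W)-1$ with
\[
\sum_{W' \subset W \cap Z_s} \deg(W') \;\leq\; \deg(W)\deg(Z_s) \;\leq\; \deg(W)\,\theta.
\]
Factoring out $\theta^{\dim(W)-1}$, the components $W'$ of $W \cap Z_s$ together contribute at most $\deg(W)\theta^{\dim(W)}$ to the weighted sum attached to $X_s$. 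Adding these contributions over all components of $X_{s-1}$ bounds the weighted sum for $X_s$ by the one for $X_{s-1}$, and the induction closes against the inductive hypothesis $\leq \deg(X)\theta^{\dim(X)}$.

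The one technical point requiring care is the bookkeeping that underpins the decomposition: every irreducible component $W'$ of $X_s$ must be counted exactly once. Since $X_s \subset X_{s-1}$ and $W'$ is irreducible, it is contained in a unique irreducible component $W$ of $X_{s-1}$; then either $W \subset Z_s$ (in which case $W' = W$ by maximality) or $W \not\subset Z_s$ and $W'$ appears as a component of the proper intersection $W \cap Z_s$. Once this partition is verified, no further obstacle is anticipated, since the quantitative heart of the argument is just the classical Bézout inequality applied one hypersurface at a time; the weighted nature of the estimate arises automatically from the fact that each drop in dimension is compensated by a factor $\deg(Z_s) \leq \theta$, which is exactly what the exponent of $\theta^{\dim}$ is designed to absorb.
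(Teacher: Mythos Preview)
Your argument is correct and follows essentially the same induction as the paper's proof: split the components of $X_{s-1}$ according to whether they lie in $Z_s$, invoke Krull's Hauptidealsatz and B\'ezout on the proper intersections, and observe that the factor $\theta$ absorbs the drop in dimension. One small slip: the claim that an irreducible component $W'$ of $X_s$ lies in a \emph{unique} component $W$ of $X_{s-1}$ is false in general (an irreducible subvariety can sit inside the intersection of two components), but this is harmless---you only need that each $W'$ is a component of $W\cap Z_s$ for \emph{at least one} $W$, since overcounting on the right-hand side preserves the inequality.
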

\begin{proof}
We prove this by induction. For $s=1$, this is a consequence of the theorem of B\'ezout. Assume that the result holds for $s\geq 1$, and take $W$ to be an irreducible component of~$X_s$.
Since the inequality in the statement is a sum over the irreducible components, it is enough to prove
$$\sum_{\substack{W'\subset W\cap Z_{s+1}\\ \mbox{\scriptsize{ irred. comp.}}}}\deg(W')\theta^{\dim(W')}\leq \deg(W)\theta^{\dim(W)}.$$
If $W\subset Z_{s+1}$, this is trivially an equality. Otherwise, by Krull's Hauptidealsatz $$\dim(W')=\dim(W)-1,$$ and the inequality follows once again from B\'ezout.
\end{proof}

A second statement concerns the existence of an obstructing hypersurface in a relative setting. This will allow us to proceed inductively to reach the torsion subsets of small dimension.

\begin{proposition}\label{prop:Theorem}
Let $W \subset V \subset A$ be subvarieties of $A$ with $k=\codim(V) \leq k'=\codim(W) \leq g-1$.
If $W$ is irreducible and not contained in any torsion subvariety of $V$, there exists a hypersurface $Z\subset\PP^n$ such that $W_{\mathrm{tors}}\subset W \cap Z\subsetneq W$ and
\[
\deg(Z) \leq \theta:=\Big(16^{g(c+1)}n\Big)^{g-k}\,\delta_1(V)
\]

\end{proposition}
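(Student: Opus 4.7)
The plan is to iterate Proposition~\ref{prop:Z} along a descending chain from $V$ down to $W$. Since $W$ is irreducible, it is contained in some irreducible component $V^{(0)}$ of $V$. By the definition of $\delta_0$ together with Lemma~\ref{delta-ineq}, $\delta_0(V^{(0)}) \leq \delta_0(V) \leq \delta_1(V)$. Moreover $V^{(0)}$ is non-torsion: were it a torsion coset contained in $V$, it would be a torsion subvariety of $V$ containing $W$, contradicting the hypothesis. Setting $k_0 := \codim V^{(0)}$, we have $k \leq k_0 \leq k'$.

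I will construct inductively a descending sequence of irreducible subvarieties $V^{(0)} \supsetneq V^{(1)} \supsetneq \cdots$, each containing $W$ and of codimension one more than the previous, together with hypersurfaces $Z^{(i)} \subset \PP^n$. At step $i$, since $V^{(i)} \supset W$ is irreducible and non-torsion (by the same argument as for $V^{(0)}$), Proposition~\ref{prop:Z} yields $Z^{(i)}$ with $V^{(i)}_{\mathrm{tors}} \subseteq V^{(i)} \cap Z^{(i)} \subsetneq V^{(i)}$ and $\deg Z^{(i)} \leq 16^{g(c+1)} n\,\delta_0(V^{(i)})$. If $W \not\subset Z^{(i)}$, I stop with $Z := Z^{(i)}$: then $W_{\mathrm{tors}} \subseteq V^{(i)}_{\mathrm{tors}} \subset Z$ and $W \cap Z \subsetneq W$. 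Otherwise $W \subseteq V^{(i)} \cap Z^{(i)}$, and I let $V^{(i+1)}$ be an irreducible component of this intersection that contains $W$. Krull's Hauptidealsatz ensures $\dim V^{(i+1)} = \dim V^{(i)} - 1$ since $V^{(i)} \not\subset Z^{(i)}$, so the chain strictly decreases in dimension and after at most $s := k' - k_0$ iterations either the stopping condition triggers or we reach $V^{(s)} = W$ (an irreducible variety containing $W$ of the same dimension), in which case Proposition~\ref{prop:Z} applied to $W$ itself furnishes the desired hypersurface.

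The key inductive estimate is $\delta_0(V^{(i)}) \leq (16^{g(c+1)} n)^i \delta_1(V)$: given hypersurfaces of degree $\leq \delta_0(V^{(i)})$ realizing $V^{(i)}$ as an irreducible component, adjoining $Z^{(i)}$ exhibits $V^{(i+1)}$ as an irreducible component of the enlarged intersection, so that
\[
\delta_0(V^{(i+1)}) \leq \max\{\delta_0(V^{(i)}), \deg Z^{(i)}\} \leq 16^{g(c+1)} n\,\delta_0(V^{(i)}).
\]
The resulting degree bound is $\deg Z \leq (16^{g(c+1)} n)^{s+1}\delta_1(V)$, and since $s+1 \leq k'-k+1 \leq g-k$ using $k' \leq g-1$, this is at most $\theta$, as required. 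The main obstacle is justifying the inductive bound on $\delta_0$: strictly speaking, $V^{(i+1)}$ must be an irreducible component (not merely a subvariety) of the enlarged intersection, and care is required because the auxiliary intersection $\bigcap_j H_j$ realizing $\delta_0(V^{(i)})$ may carry components other than $V^{(i)}$ that could in principle contain $V^{(i+1)}$ after intersecting with $Z^{(i)}$. A proper-intersection argument, using that distinct components of $\bigcap_j H_j$ meet the irreducible $V^{(i)}$ in proper subvarieties of strictly smaller dimension, is what closes this gap.
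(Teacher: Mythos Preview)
Your overall architecture matches the paper's: iterate Proposition~\ref{prop:Z} to descend from $V$ toward $W$, using at each stage that the current variety is non-torsion because it contains $W$. The paper runs this as a proof by contradiction and keeps a chain of \emph{reducible} varieties $X_r$ (the cumulative intersection of $V$ with all hypersurfaces produced so far), controlling $\delta_1(X_r)$; you instead follow a chain of \emph{irreducible} $V^{(i)}$ and try to control $\delta_0(V^{(i)})$. Everything in your argument except the inductive $\delta_0$ bound is fine.

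That gap, however, is real, and your proposed fix does not close it. If $V^{(i)}$ is an irreducible component of $\bigcap_j H_j$ and $C$ is another one, then indeed $C\cap V^{(i)}$ has dimension at most $\dim V^{(i)}-1$; but this is \emph{exactly} $\dim V^{(i+1)}$, so nothing prevents $V^{(i+1)}\subset C$. If moreover $C\subset Z^{(i)}$ (which nothing in your construction rules out), then $C$ survives as a component of $\bigl(\bigcap_j H_j\bigr)\cap Z^{(i)}$ and strictly contains $V^{(i+1)}$. A concrete model in $\mathbb{A}^3$: take $\bigcap_j H_j=\{xy=0\}$ with components $V^{(i)}=\{x=0\}$ and $C=\{y=0\}$, and $Z^{(i)}=\{y=0\}$; then $V^{(i+1)}=\{x=y=0\}$ is not a component of the enlarged intersection. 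Your ``proper-intersection'' observation gives only $\dim(C\cap V^{(i)})<\dim V^{(i)}$, which is one short of what is needed.

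The clean repair is the paper's bookkeeping. Carry the whole intersection $X_r$ and bound $\delta_1(X_r)$, so that $\delta_1(X_{r+1})\le\max\{\delta_1(X_r),\max_j\deg Z_j\}$ holds for free, and use $\delta_0(W_j)\le\delta_1(X_r)$ for every irreducible component $W_j$ of $X_r$. The second ingredient is that at each stage one produces a hypersurface $Z_j$ for \emph{every} component $W_j$ of $X_r$ containing $W$ (not just one) and intersects with all of them; this is what forces every component of $X_{r+1}$ containing $W$ to have strictly larger codimension, since such a component must lie in some $W_j$ and hence in $W_j\cap Z_j\subsetneq W_j$. With this change your argument becomes the paper's.
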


\noindent
{\it Remark.} We stress that all the codimensions in this statement and the proof below concern subvarieties of $A$.

\begin{proof}
The proof is by contradiction. We build recursively a chain of varieties
\[
X_{k} \supseteq \cdots \supseteq X_{k'+1}
\]
satisfying the following properties, for every $k \leq r \leq k'+1$:
\begin{enumerate}[label=(\roman*)]
\item \label{item:ind1-1} $W \subset X_r$;
\item \label{item:ind1-2} each irreducible component of $X_r$ containing $W$ has codimension~$\geq r$;
\item\label{item:ind1-3} $\delta_1(X_r)\leq D_r:=\big(16^{g(c+1)}n\big)^{r-k}\,\delta_1(V)$
\end{enumerate}

For $r=k$, we choose $X_{k}:=V$ and quickly check that all three properties hold. Next, we assume that the variety $X_r$ is already constructed for some $r \geq k$.
We write $$X_r=W_1\cup\cdots\cup W_t,$$ where $W_1, \ldots, W_t$ are the irreducible components of~$X_r$.
By~\ref{item:ind1-1}, there exists an $s\geq 1$ such that $W \subset W_j$ if and only if $1\leq j\leq s$ (after possibly reordering the components).
By assumption, the varieties $W_1, \ldots, W_s$ are not torsion cosets. Thus, for every $j=1,\ldots,s$, by Proposition~\ref{prop:Z}, there is a hypersurface $Z_j$ such that
\[
\deg(Z_j)\leq 16^{g(c+1)}n \, \delta_0(W_j) \leq  16^{g(c+1)}n \, \delta_1(X_r)\leq D_{r+1},
\]
and
\begin{equation}\label{eq:ind1}
W_{j,\mathrm{tors}}\subset W_j\cap Z_j\subsetneq W_j.
\end{equation}
Because $W \subset W_j$, we have $W_{\mathrm{tors}} \subset Z_j$ which has degree $\leq D_{r+1}\leq \theta$. Since we proceed by contradiction, this forces $W \subset Z_j$. We define
\[
X_{r+1}:=X_{r}\cap\bigcap_{1 \leq j \leq s}Z_j.
\]
We know that $W \subset Z_j$ for all $1 \leq j \leq s$, so we have $W \subset X_{r+1}$,
and $X_{r+1}$ satisfies~\ref{item:ind1-1}.
To show that property~\ref{item:ind1-2} holds for $X_{r+1}$,
first observe that the only irreducible components of $X_{r+1}$ that contain $W$
are also irreducible components of $W_j\cap Z_1\cap\cdots\cap Z_s$ for some $j\leq s$.
By induction hypothesis, condition \ref{item:ind1-2} is true for $X_r$, so that $$\codim(W_j)\geq r$$ for every $1 \leq j\leq s$. The second strict inclusion in~\eqref{eq:ind1} gives
\[
\codim(W_j\cap Z_j)\geq r+1,
\]
and~\ref{item:ind1-2} is satisfied by $X_{r+1}$.
The inequalities
\[
\delta_1(X_{r+1})\leq\max\lbrace\delta_1(X_{r}),\deg(Z_1),\ldots,\deg(Z_s)\rbrace\leq D_{r+1}.
\]
finally show property~\ref{item:ind1-3} for $X_{r+1}$, which ends the proof of our induction. 

Now that the existence of our chain of varieties is proved, we see by \ref{item:ind1-2} that there is an irreducible component of $X_{k'+1}$ of codimension $\geq k'+1$ which contains~$W$. This is a contradiction for $W$ has codimension~$k'$.\qedhere
\end{proof}

\subsection{Proof of the main theorem}
We can now state a precise version of Theorem~\ref{main theorem}. It is proved by a combination of our geometric preliminaries and a second induction process.
\begin{theorem}\label{THEOREM}
Let $V\subset A$ be a variety of dimension~$d>0$. For every $0 \leq j \leq d$,
\[
\deg(V^j_{\mathrm{tors}})\leq c_j\;\delta(V)^{g-j},
\]
where
\[
c_j=\Big(16^{g(c+1)}n\Big)^{(g-j)d}\;\deg(A).
\]
\end{theorem}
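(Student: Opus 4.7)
The proof will split into the base case $j=d$ and an iterative argument for $j<d$. For the base case, I observe that $V^d_{\mathrm{tors}}$ is contained in the $d$-equidimensional part $V^d$ of $V$, write $V$ as the intersection in $A$ of hypersurfaces of $\PP^n$ of degree at most $\delta(V)$, and apply the weighted B\'ezout estimate of Lemma~\ref{lemma:HA3} with ambient $A$ to obtain $\deg V^d \leq \deg(A)\delta(V)^{g-d}$, matching $c_d\delta(V)^{g-d}$.

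For $j<d$, I reduce to $V$ irreducible by additivity of degree and discard the trivial case where $V$ itself is a torsion coset (in which $V^j_{\mathrm{tors}}=\emptyset$). The plan is then to adapt the chain construction from the proof of Proposition~\ref{prop:Theorem}, now tracking all maximal $j$-dimensional torsion cosets simultaneously rather than a single $W$. Concretely, I will build a descending sequence
\[
V=X_0\supset X_1\supset\cdots\supset X_{d-j}
\]
in $A$ such that $V^j_{\mathrm{tors}}\subset X_r$ for every $r$, $\delta_1(X_r)\leq D_r:=(16^{g(c+1)}n)^r\delta_1(V)$, and every irreducible component of $X_r$ of dimension $>j$ that is not a torsion subvariety of $V$ has codimension in $A$ at least $g-d+r$. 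Passing from $X_r$ to $X_{r+1}$ proceeds, for each such non-torsion component $W$, by applying Proposition~\ref{prop:Z} to $W$ to produce a hypersurface $Z_W\supset W_{\mathrm{tors}}$ of degree at most $16^{g(c+1)}n\cdot\delta_0(W)\leq D_{r+1}$, and replacing $W$ by $W\cap Z_W$ inside $X_r$. The crucial point ensuring $V^j_{\mathrm{tors}}\subset X_{r+1}$ is that each maximal $j$-dimensional torsion coset $W'\subset V$ is either already an irreducible component of $X_r$ (so component-wise replacement leaves it untouched), or strictly contained in one of the processed $W$, in which case the closed inclusion $W'\subset W_{\mathrm{tors}}\subset Z_W$ keeps $W'$ in $X_{r+1}$.

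After $d-j$ iterations all components of $X_{d-j}$ of dimension $>j$ are torsion subvarieties of $V$, and $V^j_{\mathrm{tors}}$ lies in the $j$-equidimensional part of $X_{d-j}$. Viewing $X_{d-j}$ as an intersection of $A$ with the defining hypersurfaces of $V$ (of degree $\leq\delta(V)$) and all the $Z_W$ produced along the way (of degree $\leq D_{d-j}$), Lemma~\ref{lemma:HA3} applied with $X=A$ and $\theta=D_{d-j}$ yields
\[
\deg V^j_{\mathrm{tors}}\leq\deg(A)\cdot D_{d-j}^{g-j}\leq\deg(A)(16^{g(c+1)}n)^{(d-j)(g-j)}\delta(V)^{g-j}\leq c_j\delta(V)^{g-j},
\]
using $(d-j)(g-j)\leq d(g-j)$. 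The main obstacle will be reconciling the two descriptions of the chain step: the naive intersection $X_r\cap\bigcap_W Z_W$ risks cutting away $j$-dimensional torsion cosets lying outside the processed components (each $Z_W$ only contains $W_{\mathrm{tors}}$, not all of $V_{\mathrm{tors}}$), whereas the component-wise replacement needed for the invariant $V^j_{\mathrm{tors}}\subset X_r$ must nevertheless be expressible, via a careful choice of defining hypersurfaces for $X_{d-j}$, as a genuine intersection to which the weighted B\'ezout estimate of Lemma~\ref{lemma:HA3} applies.
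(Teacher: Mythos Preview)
Your proposal identifies its own fatal gap in the last paragraph, and that gap is real. The component-wise replacement you need to preserve $V^j_{\mathrm{tors}}\subset X_{r+1}$ is \emph{not} the same as the global intersection $X_r\cap\bigcap_W Z_W$, and there is no way to reconcile them: if a $j$-dimensional torsion coset $W'$ sits inside one processed component $W_1$, it lies in $Z_{W_1}$, but there is no reason for it to lie in $Z_{W_2}$ for a different processed component $W_2$, since $Z_{W_2}$ was only built to contain $(W_2)_{\mathrm{tors}}$. So the intersection form destroys the invariant, while the replacement form does not yield a variety with $\delta_1(X_{r+1})\le D_{r+1}$ (the union bound~\eqref{eq:delta} would introduce an uncontrolled sum over components). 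Consequently you cannot apply Lemma~\ref{lemma:HA3} at the end in the way you want. A secondary problem is the reduction to $V$ irreducible: $\delta(V_i)$ for a component $V_i$ is not bounded by $\delta(V)$, so ``additivity of degree'' does not give the stated inequality.

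The paper circumvents exactly this obstruction by running a \emph{second} induction on top of the chain of Proposition~\ref{prop:Theorem}, rather than collapsing everything into one chain. It first uses Proposition~\ref{prop:Theorem} as a black box producing, for any non-torsion $W\subset V$, a hypersurface of degree at most the \emph{uniform} quantity $\theta=(16^{g(c+1)}n)^d\delta(V)$. Then it builds a descending family $(Y^d,\ldots,Y^0)$ of $r$-equidimensional varieties and tracks the weighted sum $\sum_{j>r}\deg(V^j_{\mathrm{tors}})\theta^{j-r}+\deg(Y^r)$, applying B\'ezout \emph{locally at each step} (to each $W_j\cap Z_j$) rather than once globally at the end. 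This avoids ever needing to realise the intermediate varieties as intersections of hypersurfaces. Lemma~\ref{lemma:HA3} is used only at the very last line, and only to bound $\sum_j\deg(X^j)\theta^j$ for the equidimensional parts $X^j$ of the \emph{original} $V$, where it applies cleanly.
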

\begin{proof}
Write $V:=X^0\cup\cdots\cup X^d$, where $X^j$ represents the $j$-equidimensional part of~$V$
for $0 \leq j \leq d$. To simplify the notation, we fix
$$\theta:=\Big(16^{g(c+1)}n\Big)^{d}\,\delta(V).$$
The key is to prove the following inequality
\begin{eqnarray}\label{eq:ind2-1}
\sum_{j=0}^d \deg(V^j_{\mathrm{tors}})\theta^j\leq \sum_{j=0}^d\deg(X^j)\theta^j.
\end{eqnarray}
To do so, we build inductively a family of varieties $(Y^d,\ldots,Y^0)$
such that, for $0 \leq r \leq d$:
\begin{enumerate}[label=(\roman*)]
\item \label{item:ind2-1} $Y^r$ is $r$-equidimensional,
\item \label{item:ind2-2}
$V_{\mathrm{tors}}^0 \cup \cdots \cup V_{\mathrm{tors}}^r \subset  X^0 \cup \cdots \cup X^{r-1} \cup Y^r,$
\item \label{item:ind2-3}
$\sum_{j=r+1}^d\deg(V^j_{\mathrm{tors}})\theta^{j-r} + \deg (Y^r)\leq \sum_{j=r}^d\deg(X^j)\theta^{j-r}$,
\item \label{item:ind2-4}
every irreducible component of $Y^r$ has non-empty intersection with $V_{\mathrm{tors}}$ and is not contained in any $V_{\mathrm{tors}}^j$, for $j > r$.
\end{enumerate}
Compared to $(X^d, \ldots, X^0)$, this family retails more adequate information on the torsion of $V$, and the degree of each variety is controlled.
We set $Y^d$ to be the union of all irreducible components of $X^d$ having non-empty intersection with $V_{\mathrm{tors}}$, and rapidly check that $Y^d$ satisfies conditions \ref{item:ind2-1}-\ref{item:ind2-4}.
Next, we assume that the variety $Y^r$ is already built for some $0<r\leq d$, and write
$$Y^r=V^r_{\mathrm{tors}}\cup W_1\cup\cdots\cup W_s,$$
where $W_1, \ldots, W_s$ are the irreducible components of $Y^r$ that do not lie in $V^r_{\mathrm{tors}}$.
Observe that if there is no such component, we can take $Y^{r-1}$ to be the union of all the irreducible components of $X^{r-1}$ satisfying~\ref{item:ind2-4} and check that \ref{item:ind2-1}-\ref{item:ind2-3} also hold.
Hence, we assume that $s \geq 1$. Moreover, since $Y^{r}$ satisfies~\ref{item:ind2-4}, the components $W_1, \ldots, W_s$ do not lie in a torsion coset of~$V$.

For each $j=1,\ldots,s$, we apply Proposition~\ref{prop:Theorem} with $V$, and $W=W_j$ which has codimension $\leq g-1$.
This gives a hypersurface $Z_j$ such that $$W_{j, \mathrm{tors}} \subset W_j\cap Z_j\subsetneq W_j,$$ and using (ii) of Lemma \ref{delta-ineq}: $\deg(Z_j) \leq \theta$.
Krull's Hauptidealsatz implies that $W_j\cap Z_j$ is either empty or $(r-1)$-equidimensional.
We then define
$$Y^{r-1}= X^{r-1} \cup \bigcup_{j=1,\ldots,s}(W_j\cap Z_j).$$
By construction, $Y^{r-1}$ satisfies properties \ref{item:ind2-1} and~\ref{item:ind2-2} for $r-1$.
Moreover, by B\'ezout's theorem we have
\[
\deg(Y^{r-1})\leq \theta\sum_{j=1}^s\deg(W_j) + \deg(X^{r-1}) \leq \theta\big(\deg(Y^r)-\deg(V^r_{\mathrm{tors}})\big)+ \deg(X^{r-1}),
\]
where the last inequality follows from the fact that $Y^r=V^r_{\mathrm{tors}}\cup W_1\cup\cdots\cup W_s$. Adding $\sum_{j=r}^d\deg(V^j_{\mathrm{tors}}) \, \theta^{j+1-r}$ on both sides, we find
$$\sum_{j=r}^d \deg(V^j_{\mathrm{tors}}) \, \theta^{j+1-r} + \deg(Y^{r-1})  \leq  \theta\big(\sum_{j=r+1}^d \deg(V^j_{\mathrm{tors}}) \, \theta^{j-r}+\deg(Y^r)\big) + \deg(X^{r-1}).$$
By property~\ref{item:ind2-3} in the induction step for~$r$
$$\sum_{j=r}^d \deg(V^j_{\mathrm{tors}}) \, \theta^{j+1-r} + \deg(Y^{r-1}) \leq \sum_{j=r-1}^d \deg(X^j) \, \theta^{j+1-r},$$
and this shows that $Y^{r-1}$ satisfies property~\ref{item:ind2-3} for $r-1$.
After possibly discarding some irreducible components (which does not affect the properties already proved), we secure the fact that $Y^{r-1}$ also satisfies~\ref{item:ind2-4} for $r-1$.
This concludes the induction.

Now, the inequality \eqref{eq:ind2-1} follows from the inclusion $V^0_{\mathrm{tors}}\subset Y^0$ together with \ref{item:ind2-3} for $r=0$. We use this and Lemma~\ref{lemma:HA3} with $X=A$ and a family of hypersurfaces of $A$ defining $V$ of degree $\leq \delta(V) \leq \theta$ to get
\begin{eqnarray*}
\sum_{j=0}^d \deg(V^j_{\mathrm{tors}})\theta^j\leq \deg(A) \, \theta^g.
\end{eqnarray*}
So for $0 \leq j \leq d$: $$\deg(V^j_{\mathrm{tors}}) \leq \deg(A) \, \theta^{g-j},$$ which yields the bounds announced in the theorem.
\end{proof}

We immediately get a proof of our corollary, with an explicit bound for the number of torsion cosets.

\begin{proof}[Proof of Corollary \ref{main corollary}]
The number $T$ of maximal torsion cosets in~$V$ is bounded in the following way:
\begin{eqnarray*}
T & \leq & \sum_{j=0}^d \deg(V_{\mathrm{tors}}^j) \leq \deg(A)\; \big(16^{g(c+1)}n\big)^{g\;d}\; \delta(V)^{g} \\
& \leq & \deg(A)^{g+1} \; \big(16^{g(c+1)}n\big)^{g\;d}\; \deg(V)^g,
\end{eqnarray*}
where $d=\dim(V)$ and the last inequality follows from Lemma \ref{delta-ineq}.
\end{proof}

\medskip

\noindent
{\it Remark.} For a good choice of our embedding,  we can get a more explicit constant (although the geometric measure of $V$ still depends on the ample line bundle). It is in fact possible to embed $A$ into a projective space of dimension $2g+1$ (see for instance \cite{Shafarevich:BAG1}, \S5.4 Theorem~9, and notice that for general $g$, this is optimal by~\cite{Barth:1970} and~\cite{VanDeVen:1975}). Furthermore, \cite{Iyer:2001}, Theorem~1.4 gives that the degree of $A$ with respect to this embedding is  $\leq 2^{2g}$. 

Composing by a suitable Veronese morphism yields a normal embedding (that can be assumed to be symmetric after translation) in a projective space of dimension~$\leq (2g+4)^3$, and we now get $\deg(A) \leq 6^{2g}$. If $V\subset A$ is a subvariety of dimension $d>0$, and $T$ is the number of maximal torsion cosets in~$V$, Theorem~\ref{main theorem} and Corollary \ref{main corollary} yield $$T \leq  \deg(V_{\mathrm{tors}}^d) \leq 16^{(c+3)dg^2} \;\delta(V)^g.$$

\subsection{Optimality}

We finally discuss the optimality of our Theorem \ref{main theorem}. We fix $K$ a number field, as well as two integers $g \geq 1$ and $0 \leq j \leq g-1$. Let also $B$ be an abelian variety of dimension $j$ defined over $K$.

\medskip

By \cite{Howe-Zhu02}, Theorem~1, there is an absolutely simple abelian variety $B'$ of dimension $g-j$ defined over $K$. Let $Z$ be a subvariety of $B'$ with codimension $1 \leq k \leq g-j$ that goes through the neutral element $0 \in B'$. Theorem \ref{raynaud} insures that the set of torsion points in $Z$ has finite cardinality, say $T$. By construction, we have $T \geq 1$. 

\medskip

Now, for a positive integer $d$, define $\phi_d : A:= B \times B' \rightarrow B'$ by $$\phi_d(x,y)= [d]y.$$ If $V:= \phi_d^{-1}(Z)$, we see that $V_{\mathrm{tors}}^j$ is a union of $T d^{2(g-j)}$ translates of $B$, hence $$\deg(V_{\mathrm{tors}}^j)= T \deg(C)d^{2(g-j)}.$$ Furthermore, we can get a set of equations defining $V$ by pulling back a given set of equations defining $Z$, so we have: $\delta(V) \leq \delta(Z)d^2$. We derive $$\deg(V_{\mathrm{tors}}^j) \geq T \frac{ \deg(B)}{\delta(Z)^g} \delta(V)^{g-j} \gg_{A,Z} \delta(V)^{g-j},$$
and as the degree of $V_{\mathrm{tors}}^j$ goes to infinity with $d$, we see that the dependence on $V$ (which has dimension $g-k \geq j$) in Theorem \ref{main theorem} cannot be improved for~$A$.

\bibliographystyle{amsplain}
\bibliography{biblio}

\end{document}